\newtheorem{theorem}{Theorem}[section]
\newtheorem{remark}{Remark}[section]
\newtheorem{lemma}[theorem]{Lemma}
\newcommand{\bt}{\begin{theorem}}
	\newcommand{\bl}{\begin{lemma}}
		\newcommand{\el}{\end{lemma}}
	\newcommand{\et}{\end{theorem}}
\newcommand{\bn}{\begin{eqnarray}}
	\newcommand{\en}{\end{eqnarray}}
\newcommand{\bnn}{\begin{eqnarray*}}
	\newcommand{\enn}{\end{eqnarray*}}
\newcommand{\ba}{\begin{aligned}}
	\newcommand{\ea}{\end{aligned}}
\newcommand{\be}{\begin{equation}}
	\newcommand{\ee}{\end{equation}}
\newcommand{\bBV}{\boldsymbol{V}}
\newcommand{\Bu}{{\boldsymbol{u}}}
\newcommand{\Be}{{\boldsymbol{e}}}
\newcommand{\bBu}{\bar{{\boldsymbol{u}}}}
\newcommand{\Bx}{{\boldsymbol{x}}}
\newcommand{\By}{{\boldsymbol{y}}}
\newcommand{\BP}{{\boldsymbol{\Psi}}}
\begin{document}
	
	\title[Liouville-type theorems]
	{Liouville-type theorems for steady solutions to the Navier-Stokes system in a slab}

	\author{Jeaheang Bang}
	\address{Department of Mathematics, The University of Texas at San Antonio}
	\email{jeaheang.bang@utsa.edu}

	\author{Changfeng Gui}
	\address{Department of Mathematics, The University of Texas at San Antonio}
	\email{Changfeng.Gui@utsa.edu}

	\author{Yun Wang}
	\address{School of Mathematical Sciences, Center for dynamical systems and differential equations, Soochow University, Suzhou, China}
	\email{ywang3@suda.edu.cn}

	\author{Chunjing Xie}
	\address{School of mathematical Sciences, Institute of Natural Sciences,
		Ministry of Education Key Laboratory of Scientific and Engineering Computing,
		and CMA-Shanghai, Shanghai Jiao Tong University, 800 Dongchuan Road, Shanghai, China}
	\email{cjxie@sjtu.edu.cn}

	\begin{abstract}
		Liouville-type theorems for the steady incompressible Navier-Stokes system are investigated for solutions in a three-dimensional slab with either no-slip boundary conditions or periodic boundary conditions. When the no-slip boundary conditions are prescribed, we prove that any bounded solution is trivial if it is axisymmetric or $ru^r$ is bounded, and that general three-dimensional solutions must be Poiseuille flows when the velocity is not big in $L^\infty$ space.  When the periodic boundary conditions are imposed on the slab boundaries, we prove that the bounded solutions must be constant vectors  if either  the swirl or radial velocity is independent of the angular variable, or $ru^r$ decays to zero as $r$ tends to infinity. The proofs are based on the fundamental structure of the equations and  energy estimates.  The key technique is to establish a Saint-Venant type estimate that characterizes the growth of Dirichlet integral of nontrivial solutions.
	\end{abstract}

	\keywords{Liouville-type theorem, steady Navier-Stokes system,  slab, no-slip boundary condition, periodic}
	\subjclass[2010]{
	35B53,	 35Q30, 35J67, 35B10, 76D05}

	
	\maketitle

	\section{Introduction and Main Results}
	It is well known that every bounded harmonic function on $\mathbb{R}^n$ is constant. This is the classical Liouville theorem for Laplace equation. Such a kind of results was later generalized to many other partial differential equations. It has played an important role in  analyzing singularity and classification of solutions for PDEs, etc. In this paper, we are interested in the Liouville-type theorem for solutions of steady incompressible Navier-Stokes system,
	\be \label{SNS}
	\left\{ \ba
	& -\Delta \Bu + (\Bu \cdot \nabla )\Bu + \nabla P  = 0, \ \ \ \ \ &\mbox{in}\ \Omega, \\
	& \nabla \cdot \Bu =0,  \ \ \ \ \ &\mbox{in}\ \Omega,  \\
	\ea \right.
	\ee
	where  $\Bu=(u^1, u^2, u^3)$ is the velocity field, and $\Omega$ is the slab $\mathbb{R}^2 \times [0, 1]$ with no-slip boundary conditions
	\begin{equation}\label{noslipBC}
		\Bu=0 \ \ \ \ \ \ \text{at}\,\, x_3=0\,\, \text{and}\,\, 1,
	\end{equation}
	or with the  periodic boundary condition in $x_3$.
	In the latter case, we may denote  the domain  by $\mathbb{R}^2\times \mathbb{T}$ and regard  the flows as periodic flows in one direction.

	Note that the solutions of \eqref{SNS} in $\mathbb{R}^2\times \mathbb{T}$ can also be regarded as solutions in the whole $\mathbb{R}^3$.
	The existence of solutions for \eqref{SNS} in an exterior domain $\Omega$ satisfying
	\be \label{Dirichletintegral}
	\int_\Omega |\nabla \Bu|^2dx  < \infty
	\ee
	was studied by Leray in the pioneering work \cite{Leray}.  A solution to \eqref{SNS} in an exterior domain or the whole space is called a $D$-solution if it satisfies \eqref{Dirichletintegral}.  With the aid of the maximum principle for the vorticity, a Liouville theorem for $D$-solutions of two-dimensional steady  Navier-Stokes system in the whole plane was established in \cite{GW}. A longstanding open problem is whether a three-dimensional $D$-solution in an exterior domain with homogeneous boundary conditions or in the whole space must equal to 0 when it tends to zero at far field (cf. \cite{Galdi}).  When $\Omega = \mathbb{R}^3$, there are a lot of studies for this problem.  It was proved in  \cite[Theorem
	X.9.5]{Galdi} that the $D$-solution must be zero provided  $\Bu \in L^{\frac{9}{2} }(\mathbb{R}^3)$. There are quite a lot of progress along this direction in recent years, one may refere to \cite{Chae-Wolf, KTW, Seregin, Chae0} and references therein.
	
	The cylindrical coordinates are convenient to study  the Navier-Stokes system. Here we first introduce the cylindrical coordinates $(r,\theta, z)$  defined as follows
	\begin{equation}\label{cylindrcord}
		x_1=r \cos \theta, \quad x_2=r\sin\theta, \quad\text{and}\,\, x_3=z;
	\end{equation}
	And we write the components of the velocity $\Bu$ as follows: $\Bu=u^r \Be_r +u^\theta \Be_\theta +u^z \Be_z$ where $u^r$, $u^\theta$, and $u^z$ are called radial, swirl, and axial velocity, respectively, with
	\[
	\Be_r=(\cos\theta, \sin \theta, 0),\quad \Be_\theta =(-\sin\theta, \cos\theta, 0), \quad \text{and}\,\, \ \Be_z=(0, 0,1).
	\]
	
	We sometimes use Cartesian coordinates $(x_1,x_2,x_3)$ with $(u^1,u^2,u^3)$ and, other times, use  cylindrical coordinates $(r,\theta,z)$ with $(u^r,u^\theta,u^z)$. If no confusion arises, we may write $x_3$ and $z$ interchangeably to indicate the third coordinate. Likewise, we may use $u^3$ and $u^z$ interchangeably to represent the third component of the velocity $\Bu$.
	
	A solution $\Bu$ is called an axisymmetric solution if all the components $u^r, u^\theta, u^z$ are independent of $\theta$.
	In the cylindrical coordinates, the Navier-Stokes system \eqref{SNS} can be written as
	\be \label{NScylindrical}
	\left\{
	\begin{aligned}
		&\left(u^r \partial_r + \frac{u^\theta}{r}  \partial_\theta + u^z \partial_z\right)u^r - \frac{(u^\theta)^2}{r} + \frac{2}{r^2} \partial_\theta u^\theta + \partial_r P = \left(\Delta_{r,\theta, z} - \frac{1}{r^2} \right) u^r,\\
		&\left(u^r \partial_r + \frac{u^\theta }{r}  \partial_\theta + u^z \partial_z\right)u^\theta +
		\frac{u^\theta u^r}{r} - \frac{2}{r^2} \partial_\theta u^r + \frac{1}{r} \partial_\theta P =  \left(\Delta_{r,\theta, z} - \frac{1}{r^2} \right) u^\theta,\\
		&\left(u^r \partial_r + \frac{u^\theta}{r} \partial_\theta + u^z \partial_z\right)u^z + \partial_z P =\Delta_{r,\theta, z} u^z,\\
		&\partial_r u^r +\frac{1}{r}\partial_\theta u^\theta+\partial_z u^z +\frac{u^r}{r}=0,
	\end{aligned}
	\right.
	\ee
	where
	\[
	\Delta_{r, \theta, z}=\partial_r^2 + \frac{1}{r} \partial_r + \frac{1}{r^2} \partial_\theta^2 + \partial_z^2.
	\]

	The Liouville-type theorems for axisymmetric D-solutions were established when the pointwise behavior for the velocity field or vorticity is prescribed at far field, see \cite{KTW,Zhao, CPZZ} and references therein.
	
	As pointed out in \cite{Tsai}, a strong version of Liouville-type theorems for the steady Navier-Stokes system in the whole space was conjectured by Seregin and Sverak: whether the  solutions
	$\Bu \in H_{loc}^1(\mathbb{R}^3) \cap L^\infty(\mathbb{R}^3)$
	must be constant vectors. A significant result in \cite{KNSS,CSYT1, CSYT2} asserts that every bounded axisymmetric steady solution with type I singularity for Navier-Stokes system  is trivial. This Liouville-type theorem also holds for bounded two-dimensional solutions for steady Navier-Stokes system in the whole plane. Furthermore,  the analysis on ancient solutions of Navier-Stokes system in \cite{LRZ} asserts that the bounded axisymmetric steady solution in $\mathbb{R}^2 \times \mathbb{T}$ must be zero if the swirl velocity also decays as $r^{-1}$ at far field.

	Very recently, the flows in a slab were studied in \cite{CPZ,CPZZ}.  It was  proved  that the  smooth solution $\Bu$ of equation \eqref{SNS}  must be $0$ if $\Bu$ satisfies either the no-slip boundary conditions \eqref{noslipBC} together with \eqref{Dirichletintegral}
	or that $\Bu$ is axisymmtric, periodic in the axial direction, and satisfies \eqref{Dirichletintegral}.
	The results for the bounded periodic solutions were improved in \cite{Pan}  where both condition \eqref{Dirichletintegral} and independence of $\theta$ for the swirl velocity
	$u^\theta$ are needed. The Liouville-type theorem for flows in a slab with both no-slip boundary conditions and periodic conditions was obtained under refined conditions in \cite{Tsai}.

	In this paper, we focus on  the Liouville-type theorem for flows in a slab.
	Our first result is on the axisymmetric solutions of steady Navier-Stokes system in a slab with no-slip boundary conditions.

	\bt \label{mainresult1} Let $\Omega =\mathbb{R}^2 \times (0,1)$ and $\Bu$ be a smooth axisymmetric solution of the Navier-Stokes system \eqref{SNS} with no-slip boundary conditions \eqref{noslipBC}.
	Then  $\Bu \equiv 0$ if
	
	either (a)
	\be \label{growth}
	\varliminf_{R\rightarrow + \infty}  R^{-4}  \mathcal{E}(R) = 0
	\ee
		where \begin{equation}\label{defER}
		\mathcal{E}(R)=\displaystyle \int_0^1 \int_{\{x_1^2+x_2^2 < R^2\}} |\nabla \Bu|^2 dx_1dx_2dx_3,
			\end{equation}
	or (b)
	 \begin{equation} \label{growth-coro1}
	\lim_{R\rightarrow + \infty} R^{-1}  \sup_{z \in [0, 1]} |\Bu (R, z) |        =0.
		\end{equation}
	\et
	
	There are a few remarks in order.
	\begin{remark}
		Theorem \ref{mainresult1} implies that any nontrivial axisymmetric solution (if it exists) to the homogeneous Navier-Stokes equations in a slab should grow at least linearly.
	\end{remark}
	
	\begin{remark}
		The proof of Theorem \ref{mainresult1} also shows that
		if $\Bu$ is a nontrivial solution to the homogeneous Stokes equations in $\mathbb{R}^2 \times (0, 1)$ with no-slip boundary conditions, then
		$\mathcal{E}(R)$
		must grow exponentially. This phenomenon is quite similar to that for Laplace equation in a slab.
	\end{remark}
	
	Next, we state our results on general three-dimensional flows in a slab with certain asymptotic constraints.
	\bt \label{mainresult4}Let $\Bu$ be a smooth solution to the Navier-Stokes system \eqref{SNS} in $\Omega=\mathbb{R}^2\times (0,1)$ with no-slip boundary conditions \eqref{noslipBC}.
	Then there exists an absolute number $\alpha \in (0,1)$ such that $\Bu\equiv 0$ if one of the following conditions holds:
	
	$(a)$\
	\be \label{growth-4}
	\varliminf_{R\rightarrow + \infty}  R^{-\alpha}  \mathcal{E}(R)  =  0
	\ee
	where $\mathcal{E}(R)$ is defined in \eqref{defER};
	
		$(b)$\  $\Bu$ satisfies for some $\beta \in [0,\frac{\alpha}{2}]$,
	\begin{equation}\label{condthm1.2}
		\lim_{r\to\infty} r^{-\beta }\sup_{\substack{{\theta\in [0,2\pi)}\\{z\in [0,1]}}} |\Bu (r, \theta, z)| =0\quad\text{and}\quad  \sup_{(r, \theta, z)\in \Omega}r^{1+\beta -\frac{\alpha}{2}}| u^r(r, \theta, z)|<\infty.
	\end{equation}
	\et

	\begin{remark} A direct consequence of Theorem \ref{mainresult4} is as follows.
		Let $\Bu$ be a smooth solution to the Navier-Stokes system \eqref{SNS} such that
		\be \nonumber
		\int_0^1 \int_{\mathbb{R}^2 } |\nabla \Bu |^2 dx < \infty.
		\ee
		Then $\Bu \equiv 0$. This is exactly  \cite[Theorem 1.1]{CPZZ}, which asserts that a homogeneous D-Solution to the steady incompressible Navier-Stokes system in a slab with no-slip boundary conditions must be 0. Hence Theorem \ref{mainresult4} improves the results obtained in \cite[Theorem 1.1]{CPZZ}.
	\end{remark}

	\begin{remark} As long as there is a number  $\beta\in [0,\frac{\alpha}{2}]$ such that the conditions \eqref{condthm1.2} are satisfied, $\Bu$ must be $0$.
	In particular, we have $\Bu\equiv 0$ when both $\Bu$ and $ru^r$ are uniformly bounded, where the conditions \eqref{condthm1.2} are guaranteed by $\beta=\frac{\alpha}{2}$.
	\end{remark}
	
	\begin{remark} \label{RemExpo}
		The conditions in Case (b) of Theorem \ref{mainresult4} also include another special case that
		$\lim_{r \to\infty} \sup_{\theta, z} |\Bu(r, \theta, z)| =0$ and $r^{1-\frac{\alpha}{2}} u^r$ is uniformly bounded, where the conditions \eqref{condthm1.2} are satisfied with $\beta=0$.
		
	\end{remark}
	
	\begin{remark}\label{lbb}
	It should be interesting to get the optimal exponent $\alpha$ in \eqref{growth-4}. From the proof for Theorem \ref{mainresult4}, this optimal exponent should be related to the best constant for the Bogovskii map in Lemma \ref{Bogovskii}.
		\end{remark}

	
	\begin{remark}
		If $\Bu$ is only bounded, then the trivial solution $\Bu=0$ may  not be the only solution for Navier-Stokes system in a slab with no-slip boundary conditions. For example, $\Bu= (c_1x_3(1-x_3), c_2x_3(1-x_3), 0)$ is also a  bounded solution of \eqref{SNS} in a slab with no-slip boundary conditions. These solutions are called Poiseuille flow in a layer. It is a conjecture whether the bounded solution to the Navier-Stokes system must be a Poiseuille flow.
	\end{remark}
	
	In fact,
	if $\Bu$ is not too big,  we have the following Liouville-type theorem for general three-dimensional flows in a slab.

	\bt\label{mainresult5} Let $\Bu$ be a smooth solution to the Navier-Stokes system \eqref{SNS} in a slab $\mathbb{R}^2\times (0,1)$ with no-slip boundary conditions \eqref{noslipBC}. Then $\Bu $ must be a Poiseuille flow of form  $(c_1x_3(1-x_3), c_2x_3(1-x_3), 0)$ if
	\be \label{Poiseuille-1}
	\| \Bu \|_{L^\infty(\Omega)} <\pi.
	\ee
	\et

	\begin{remark}
	 The assumption on the upper bound of $\Bu$ in Theorem \ref{mainresult5} may not be optimal. The current bound comes from the estimate \eqref{main5-5}. 	It is still an open problem whether Poiseuille flow is the unique bounded solution when $\|\Bu\|_{L^\infty(\Omega)}$ is not small. When the domain is a strip in $\mathbb{R}^2$, it relates to the resolution of the famous Leray problem for flows in infinitely long nozzles, which is still a longstanding open problem. One may refer to \cite{LS, Galdi, WX} for more discussion about this problem.
	\end{remark}
	
	\begin{remark} The proof of Theorem \ref{mainresult5} shows that  $\Bu$ must be a Poiseuille flow, i.e., $\Bu = (c_1x_3(1-x_3), c_2x_3(1-x_3), 0)$, provided $\Bu$ is a bounded smooth solution to the homogeneous Stokes equations in $\mathbb{R}^2 \times [0,1]$. Hence the Liouville-type theorem for the Stokes system in a slab can be established even when the assumption \eqref{Poiseuille-1} is removed.
	\end{remark}
	
	
	Finally, we state our results on Liouville-type theorems for the steady Navier-Stokes system in a slab with periodic boundary conditions.
	\bt \label{corollary2}Let $\Bu$ be a bounded smooth solution to the Navier-Stokes system \eqref{SNS} in $\mathbb{R}^2\times \mathbb{T}$.
	Then $\Bu$ must be a constant vector provided that one of the following conditions holds:
	\begin{enumerate}
		
		\item[(a)] $u^\theta$ is axisymmetric, i.e., $u^\theta$ is independent of $\theta$;
		\item[(b)] $u^r$ is axisymmetric, i.e.,  $u^r$ is independent of $\theta$;
		
		\item[(c)] $ru^r$ converges to $0$, as $r \rightarrow +\infty$.

\item[(d)] $\|\Bu\|_{L^\infty(\Omega)} <2\pi. $

	\end{enumerate}
	Furthermore, in all cases (a), (b), and (c), the only nonzero component of the velocity field must be $u^z$, i.e., the constant vector $\Bu$ must be of the form $(0,0, c)$.
	\et
	
	\begin{remark}
		It is clear that axisymmetric solutions satisfy the condition in case (a) of Theorem \ref{corollary2}.  Therefore, any bounded axisymmetric solution periodic in the axial direction must be of the form $(0, 0, c)$.
	\end{remark}

	
	
	Here we illustrate the major methods of the paper.
	The key idea of this paper is to establish various differential inequalities for the Dirichlet integral of $\Bu$ over a finite part $\Omega_R$. The idea dates back
	to the proof of Saint-Venant's principle for solutions to equations of elasticity  in \cite{Toupin, Knowles}.  This idea was also generalized to deal with linear elliptic equations in \cite{OY} where the growth of Dirichlet integral for solutions was established. When adapting the same idea to Stokes or Navier-Stokes system in a pipe, the presence of pressure term causes  serious difficulties. Inspired by the work \cite{LS}, we develop a new form of the  Bogovskii map in Lemma \ref{Bogovskii}  to estimate the pressure term in a careful and efficient way, utilizing the structure of the Navier-Stokes equations. In particular, we make  frequent use of the improved estimates in \eqref{BetterEst}.
	We would like to mention that the stream function can be used to deal with axisymmetric flows, see \cite{Horgan-Wheeler}. However, it does not seem to work well with general three-dimensional flows.
	
	The organization for the rest of the paper is as follows. Some preliminary results on Bogovskii map and differential inequalities are presented in Section \ref{Sec2}. The proof of axisymmetric solution in a slab with no-slip boundary conditions is given in Section \ref{Sec3}. In Section \ref{Sec4}, we give the proof for Liouville-type theorem for general three-dimensional solutions in a slab with no-slip boundary conditions. Finally, the Liouville-type theorem for the steady Navier-Stokes system in a periodic slab is established in Section \ref{Sec5}.
	
	\section{Preliminaries}\label{Sec2}
	In this section, we give some preliminaries. First, we introduce the following notations. Define
	\[
	L^p_0(\Omega)=\left\{f: \ \ f\in L^p(\Omega), \ \ \int_\Omega f dx =0 \right\}.
	\]
	For any $R\geq 2$, denote
	$D_R = (R-1, R)\times (0, 1)$, $\mathcal{D}_R = (R-1, R) \times (0, 2\pi) \times (0, 1)$,  $\Omega_R = B_R \times (0, 1)$, and $\mathcal{O}_R = (B_R \setminus \overline{B_{R-1}}) \times (0, 1)$, where $B_R= \{(x_1,x_2)\in \mathbb{R}^2: x_1^2+x_2^2 <R^2\}$. For any $\Bx\in \mathbb{R}^3$, define $\mathcal{B}_r(\Bx)=\{\By\in \mathbb{R}^3: |\By-\Bx|<r\}$.
	In the rest of the paper, $\varphi_R(r)$ denotes the smooth cut-off function satisfying
	\be \label{cut-off}
	\varphi_R(r) = \left\{ \ba
	&1,\ \ \ \ \ \ \ \ \ \ r < R-1, \\
	&R-r,\ \ \ \ R-1 \leq r \leq R, \\
	&0, \ \ \ \ \ \ \ \ \ \ r > R.
	\ea  \right.
	\ee
	
	Here we introduce the Bogovskii map, which gives a solution to the divergence equations. The general form is due to Bogovskii\cite{Bogovskii}, see also  \cite [Section III.3]{Galdi} and \cite[Section 2.8]{Tsai-Book}.
	\begin{lemma}\label{Bogovskii}
		Let $\Omega$ be a bounded Lipschitz domain in $\mathbb{R}^n$ with $n\geq 2 $.  For any $q\in (1, \infty)$, there is a linear map $\boldsymbol{\Phi}$ that maps a scalar function $f\in L^q_0(\Omega)$ to a vector field $\bBV = \boldsymbol{\Phi} f \in W_0^{1, q}(\Omega; \mathbb{R}^n)$ satisfying
		\be \nonumber
		{\rm div}~\bBV = f \ \ \ \text{and} \quad \|\bBV\|_{W_0^{1, q}(\Omega)} \leq C (\Omega, q) \|f\|_{L^q(\Omega)}.
		\ee
		In particular, we have the following results.
		\begin{enumerate}
			\item    For any $f \in L^2_0(D_R)$,
			the  vector valued function $\bBV = \boldsymbol{\Phi} f \in H_0^1(D_R; \mathbb{R}^2)$ satisfies
			\be \nonumber
			\partial_r V^r + \partial_z V^z =f \ \  \mbox{in}\,\, D_R
			\quad
			\text{and}
			\quad
			\|\tilde{\nabla } \bBV\|_{L^2(D_R)}
			\leq C \|f\|_{L^2(D_R)},
			\ee
			where $\tilde{\nabla } = (\partial_r, \ \partial_z ) $ and $C$ is some constant independent of $R$.
			
			\item  For any $f \in L^2(\mathcal{D}_R)$,
			the   vector valued function $\bBV = \boldsymbol{\Phi} f \in H_0^1( \mathcal{D}_R; \mathbb{R}^3)$ satisfies
			\be \nonumber
			\partial_r V^r + \partial_\theta V^\theta +  \partial_z V^z =f \ \   \mbox{in}\,\, \mathcal{D}_R
			\quad
			\text{and}
			\quad
			\|\bar{\nabla } \bBV\|_{L^2(\mathcal{D}_R)}
			\leq C \|f\|_{L^2( \mathcal{D}_R)},
			\ee
			where $\bar{\nabla } = (\partial_r, \ \partial_\theta, \ \partial_z ) $ and $C$ is some constant independent of $R$.
		\end{enumerate}
	\end{lemma}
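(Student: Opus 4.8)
The plan is to invoke the general Bogovskii theorem for the first assertion and then derive the two quantitative estimates, where the whole issue in the latter is the \emph{$R$-independence} of the constant. For the general statement I would simply cite the classical construction: when $\Omega$ is star-shaped with respect to a ball, Bogovskii's explicit integral operator $\boldsymbol{\Phi}$ solves $\div \bBV = f$ for $f\in L^q_0(\Omega)$ with $\bBV\in W_0^{1,q}$, and since its kernel is of Calder\'on--Zygmund type one obtains $\|\bBV\|_{W_0^{1,q}(\Omega)}\le C(\Omega,q)\|f\|_{L^q(\Omega)}$ for every $q\in(1,\infty)$; a general bounded Lipschitz domain is reduced to this case by a finite decomposition into star-shaped pieces. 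This is exactly the content of \cite{Bogovskii}, \cite[Section III.3]{Galdi}, and \cite[Section 2.8]{Tsai-Book}, so I would quote it verbatim.

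The crucial observation for cases $(1)$ and $(2)$ is that the domains form a one-parameter family of \emph{rigid translates} of a single fixed box. Writing $\tau_R(\Bx)=\Bx+(R-2,0)$ in the $(r,z)$-plane gives $D_R=\tau_R(D_2)$, and $\sigma_R(\Bx)=\Bx+(R-2,0,0)$ in $(r,\theta,z)$-space gives $\mathcal{D}_R=\sigma_R(\mathcal{D}_2)$. Moreover $D_2$ and $\mathcal{D}_2$ are convex, hence star-shaped with respect to a ball, so the explicit single-domain operator applies to them directly and no decomposition is even needed.

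I would then \emph{define} the operator on $D_R$ by transport along $\tau_R$: given $f\in L^2_0(D_R)$, set $\hat f=f\circ\tau_R\in L^2_0(D_2)$, apply $\boldsymbol{\Phi}$ on $D_2$ to obtain $\hat{\bBV}$, and put $\bBV=\hat{\bBV}\circ\tau_R^{-1}$. Because a translation is a measure-preserving isometry that commutes with each of $\partial_r,\partial_z$, the identity $\partial_r V^r+\partial_z V^z=f$ transports from $D_2$ to $D_R$, and every $L^2$ and $H^1_0$ norm is left unchanged; hence $\|\tilde{\nabla}\bBV\|_{L^2(D_R)}\le C(D_2)\|f\|_{L^2(D_R)}$ with $C(D_2)$ manifestly free of $R$. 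Case $(2)$ is identical after replacing $\tau_R,\partial_z$ by $\sigma_R,(\partial_r,\partial_\theta,\partial_z)$.

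Since essentially all of the analysis is absorbed into the cited theorem, there is no deep obstacle here; the point is only to notice the translation symmetry that decouples the constant from $R$. The one detail that deserves care is that solvability in $H^1_0$ forces $f$ to have zero mean, so case $(2)$ is to be read with $f\in L^2_0(\mathcal{D}_R)$, matching the mean-zero hypothesis already present in case $(1)$.
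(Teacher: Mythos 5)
Your proposal is correct and takes essentially the same approach as the paper: the paper also quotes the classical Bogovskii theorem (citing Bogovskii, Galdi, and Tsai) for the general statement, and obtains the $R$-independence of the constants in (1) and (2) by precisely the translation argument you give, namely transporting $f$ back to the fixed domain $D_2$ via $r\mapsto r-R+2$, applying $\boldsymbol{\Phi}$ there, and translating the solution forward. Your observation that case (2) should be read with $f\in L^2_0(\mathcal{D}_R)$ is also well taken: the mean-zero compatibility condition is forced by the divergence theorem for $H^1_0$ solutions, and the paper only ever applies part (2) to data such as $ru^r$ whose integral over $\mathcal{D}_R$ vanishes.
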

	
	The constants in the estimates of parts (1) and (2) do not depend on $R$; indeed, for $\boldsymbol{\Phi}$ given in $D_R=D_2$, we can define $\boldsymbol{\Phi}_R$ in $D_R$ for general $R$ as follows. For $f\in L^2_0 (D_R), $ it holds that $(\tau_{-R+2}\circ f)(r,z):= f(r-R+2,z)\in L^2_0 (D_2)$ and $V=\boldsymbol{\Phi} (\tau_{-R+2}\circ f) \in H^1_0 (D_2; \mathbb{R}^2)$.  Then $(\tau_{R-2}\circ) V(r,z)= V(r+R-2,z)\in H^1_0 (D_R; \mathbb{R}^2)$ and we can define $\boldsymbol{\Phi}_R f= \tau_{R-2}V$.
	
	We will apply the Bogovskii map $\boldsymbol{\Phi}$ in a non-standard way. In a domain $\mathcal{O}_R$, one can solve the equation
	\begin{equation} \label{divEq}
		\mathrm{div} \thinspace \bBV = f \quad \text{in }\mathcal{O}_R
	\end{equation}
	by utilizing the Bogovskii map $\boldsymbol{\Phi}$ in a standard way, which yields the estimate
	\begin{equation} \label{BogoEst}
		\|\nabla \bBV\|_{L^2 (\mathcal{O}_R)} \leq CR \|f\|_{L^2 (\mathcal{O}_R)}.
	\end{equation}
	On the other hand, one can solve the same equation \eqref{divEq} with different estimates as follows. Define a function $g$ by $g=rf(r,\theta,z)$ in $\mathcal{D}_R$ and a vector field $\boldsymbol{W}=(W^r, W^\theta, W^z)$ by
	\begin{equation}
		W^r=rV^r, \quad W^\theta= V^\theta,\quad W^z=rV^z \quad \text{in }\mathcal{D}_R.
	\end{equation}
	Then using cylindrical coordinates, one can see that equation \eqref{divEq} is equivalent to the equation
	\begin{equation} \label{divEqD}
		\frac{\partial W^r}{\partial r}+
		\frac{\partial W^\theta}{\partial \theta}
		+\frac{\partial W^z}{\partial z}=g \quad \text{in }\mathcal{D}_R.
	\end{equation}
	Note equation \eqref{divEq} holds in $\mathcal{O}_R$ whereas equation \eqref{divEqD} holds in $\mathcal{D}_R$. Applying the Bogovskii map to solve the equation \eqref{divEqD} now yields an estimate of the gradient $(\partial_r, \partial_\theta, \partial_z )\boldsymbol{W}$ in terms of $g$, which, in turn, gives the following estimates:
	\begin{equation} \label{BetterEst}
		\begin{aligned}
				\left\|
			\frac{1}{r} \partial_\theta V^z
			\right\|_{L^2(\mathcal{O}_R)} + 	\left\|\frac{1}{r}\partial_\theta V^r
			\right\|_{L^2(\mathcal{O}_R)}
			\leq \,& CR^{-1}\| f
			\|_{L^2(\mathcal{O}_R)},\\		
			\|(\partial_r, \partial_z)V^r\|_{L^2(\mathcal{O}_R)} +\|
			(\partial_r, \partial_z)V^z
			\|_{L^2(\mathcal{O}_R)}+ \left\| \frac{1}{r} \partial_\theta V^\theta
			\right\|_{L^2(\mathcal{O}_R)}
			\leq \,& C\|f\|_{L^2(\mathcal{O}_R)},\\
			\|(\partial_r, \partial_z) V^\theta
			\|_{L^2(\mathcal{O}_R)}
			\leq \,& CR\|f
			\|_{L^2(\mathcal{O}_R)}.
				\end{aligned}
		\end{equation}
	One can compare estimates \eqref{BetterEst} to \eqref{BogoEst}. All estimates in \eqref{BetterEst} have constants with better behavior with respect to $R$ than \eqref{BogoEst} except the third one in \eqref{BetterEst}. We will use the Bogovskii map $\boldsymbol{\Phi}$ in this way rather than the standard way.
	
	The second lemma is used to characterize the growth of functions which satisfy various differential inequalities.
	\begin{lemma}\label{PL}
		Let $\phi(t)$ be a nondecreasing nonnegative function  and $t_0>1$ be a fixed constant. Suppose that $\phi(t)$ is not identically zero.
		
		(a)  If $\phi(t)$  satisfies
		\be \label{lemma2-1}
		\phi(t) \leq C_1 \phi^{\prime}(t) + C_2 \left[ \phi^{\prime}(t) \right]^{\frac32} \ \ \ \text{for any}\,\, t\geq t_0,
		\ee
		then
		\be \label{lemma2-2}
		\varliminf_{t \rightarrow + \infty} t^{-3} \phi(t) > 0.
		\ee
		
		(b) If $\phi(t)$ satisfies
		\be \label{lemma2-3}
		\phi(t) \leq C_3 \phi^{\prime}(t) + C_4 t^{-\frac12} \left[ \phi^{\prime}(t) \right]^{\frac32} \ \ \ \text{for any}\,\, t\geq t_0,
		\ee
		then
		\be \label{lemma2-4}
		\varliminf_{t \rightarrow + \infty} t^{-4} \phi(t) > 0.
		\ee
		
		(c) If $\phi(t_0)>0$ and there exist constants $C_1>1$ and $C_2>0$ such that $\phi(t)$  satisfies
		\be
		\phi(t) \leq C_1 t \phi^{\prime}(t) + C_2 t \left[ \phi^{\prime}(t) \right]^{\frac32} \ \ \ \text{for any}\,\, t\geq t_0,
		\ee
		then for any fixed $0<\alpha<\frac{1}{C_1}$, one has
		\be
		\phi(t) \geq  Ct^{\alpha}\quad \text{for any}\,\, t\geq t_0,
		\ee
		where $C=\min\left\{\frac{\phi(t_0)}{2}t_0^{-\alpha},\frac{1}{4\alpha}\left(\frac{1}{\alpha C_1}-1\right)^2  \frac{C_1^2}{C_2^2}\right\}$.
	\end{lemma}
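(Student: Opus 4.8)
The plan is to treat parts (a) and (b) by a single mechanism — first force $\phi$ to infinity, then show the cubic term in the differential inequality dominates and integrate a first–order comparison for $\phi^{1/3}$ — whereas part (c) is a direct barrier (first–touch) argument. As a preliminary, since $\phi$ is nondecreasing, nonnegative and not identically zero, there is $t_1\geq t_0$ with $\phi(t_1)>0$, hence $\phi(t)\geq\phi(t_1)>0$ for $t\geq t_1$; moreover in each of the three inequalities $\phi(t)>0$ forces $\phi'(t)>0$, so $\phi$ is strictly increasing past $t_1$. For (a) and (b) I would first show $\phi(t)\to+\infty$: if $\phi$ were bounded it would converge to some $L>0$, so $\int^{\infty}\phi'<\infty$ and there is a sequence $t_n\to\infty$ with $\phi'(t_n)\to0$; feeding this into the right-hand side of \eqref{lemma2-1} (resp. \eqref{lemma2-3}) makes it tend to $0$, forcing $\phi(t_n)\to0$ and contradicting $\phi\to L>0$.

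For part (a) I would exploit that once $\phi$ is large the cubic term controls the estimate. If $\phi'(t)\geq1$ then $[\phi'(t)]^{3/2}\geq\phi'(t)$, so \eqref{lemma2-1} gives $\phi\leq(C_1+C_2)[\phi']^{3/2}$; conversely $\phi'(t)<1$ forces $\phi(t)\leq C_1+C_2$. Hence once $t$ is large enough that $\phi(t)>C_1+C_2$ (possible since $\phi\to\infty$) one obtains the autonomous inequality $\phi'\geq(C_1+C_2)^{-2/3}\phi^{2/3}$. Rewriting this as $\frac{d}{dt}\phi^{1/3}=\tfrac13\phi^{-2/3}\phi'\geq\tfrac13(C_1+C_2)^{-2/3}$ and integrating yields $\phi^{1/3}(t)\gtrsim t$, i.e. $\phi(t)\gtrsim t^{3}$, which is \eqref{lemma2-2}.

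For part (b) the same idea applies in the regime where the cubic term dominates the linear one: there \eqref{lemma2-3} gives $\phi\leq 2C_4 t^{-1/2}[\phi']^{3/2}$, hence $\phi'\geq (2C_4)^{-2/3}\,t^{1/3}\phi^{2/3}$, and integrating $\frac{d}{dt}\phi^{1/3}\gtrsim t^{1/3}$ produces $\phi^{1/3}\gtrsim t^{4/3}$, i.e. $\phi\gtrsim t^{4}$, giving \eqref{lemma2-4}. The main obstacle is that at a given $t$ it may instead be the \emph{linear} term $C_3\phi'$ that dominates, where \eqref{lemma2-3} only yields $\phi'\geq\phi/(2C_3)$; I must rule out that this regime persists. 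In it one has $[\phi']^{1/2}< (C_3/C_4)t^{1/2}$, so combining with $\phi'\geq\phi/(2C_3)$ forces $\phi(t)< K t$ with $K=2C_3^{3}/C_4^{2}$. Thus on the linear-dominant set $\phi$ stays below the line $Kt$ yet grows at exponential rate $1/(2C_3)$; since exponential growth overtakes any line, $\phi$ must leave this set at some large $t_\ast$. Because $\phi(t)\geq Kt$ implies we are in the cubic-dominant regime, a short barrier computation ($\phi'\geq cK^{2/3}t\geq K=(Kt)'$ for $t$ large) shows that once $\phi(t)>Kt$ it stays above $Kt$ thereafter, so the preceding integration applies on $[t_\ast,\infty)$.

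Part (c) is a pure comparison argument. Set $w(t)=Ct^{\alpha}$ with $C$ the stated minimum and $\alpha<1/C_1$ fixed. The first term in the minimum guarantees $\phi(t_0)\geq w(t_0)$, while the second is chosen precisely so that $w$ \emph{strictly violates} the hypothesis, namely $C_1 t\,w'(t)+C_2 t\,[w'(t)]^{3/2}<w(t)$ for all $t\geq t_0$; dividing by $w$ this reduces to $C_1\alpha+C_2\alpha^{3/2}C^{1/2}t^{(\alpha-1)/2}<1$, which holds since $C_1\alpha<1$, the exponent $(\alpha-1)/2$ is negative on $t\geq t_0>1$, and $C$ is taken small enough. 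Then I argue by contradiction: if $\phi(t)<w(t)$ somewhere, let $t_1$ be the first point with $\phi(t_1)=w(t_1)$, so $\phi'(t_1)\leq w'(t_1)$; since the right-hand side of the hypothesis is increasing in $\phi'$, this gives $w(t_1)=\phi(t_1)\leq C_1 t_1 w'(t_1)+C_2 t_1[w'(t_1)]^{3/2}<w(t_1)$, a contradiction, so $\phi(t)\geq Ct^{\alpha}$ on $[t_0,\infty)$. The only genuinely delicate point in the whole lemma is the regime-switching bookkeeping in part (b); matching the explicit constant in (c) is routine once the strict barrier inequality above is enforced.
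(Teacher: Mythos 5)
Your parts (a) and (c) are correct. In (a) your route (first force $\phi\to\infty$, then note that $\phi(t)>C_1+C_2$ forces $\phi'(t)\geq 1$, hence $\phi\leq (C_1+C_2)[\phi']^{3/2}$, then integrate $\phi^{1/3}$) is essentially the paper's argument; the paper reaches the positive lower bound on $\phi'$ more directly, from $C_1\phi'+C_2[\phi']^{3/2}\geq \phi(t_1)>0$ for all $t\geq t_1$, without needing the unboundedness step. In (c) you give a genuinely different and cleaner proof than the paper: the paper locates the first time $\bar t$ where $\phi(\bar t)=C\bar t^{\alpha}$ and splits into two cases according to the size of $\phi'(\bar t)$ (which tacitly uses some continuity of $\phi'$), whereas you check once and for all that $w=Ct^{\alpha}$ is a strict subsolution, reducing this to $C_1\alpha+C_2\alpha^{3/2}C^{1/2}t^{(\alpha-1)/2}<1$ (your computation is right, and the stated constant $C$ satisfies it with room to spare, since $\bigl(\tfrac{1}{\alpha C_1}-1\bigr)^2\tfrac{C_1^2}{4\alpha C_2^2}=\tfrac{(1-\alpha C_1)^2}{4\alpha^3C_2^2}$), and then run a standard first-touch comparison that only needs $\phi'(t_1)\leq w'(t_1)$ at the touching point. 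That is a real simplification.

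Part (b), however, has a genuine gap, exactly at the regime-switching step you yourself flag as delicate. What you actually prove is: (i) at linear-dominant times, $\phi(t)<Kt$ and $\phi'\geq\phi/(2C_3)$; (ii) hence the linear-dominant set cannot contain a ray $[T,\infty)$ (else exponential growth beats $Kt$), so there exist arbitrarily large cubic-dominant times $t_\ast$; (iii) if ever $\phi(t)>Kt$ at a large time, then by the contrapositive of (i) together with your barrier computation, $\phi$ stays above the line, cubic dominance persists, and integration gives $\phi\gtrsim t^4$. The missing link is between (ii) and (iii): being cubic-dominant at $t_\ast$ does \emph{not} imply $\phi(t_\ast)>Kt_\ast$. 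The implication you established is one-directional ($\phi\geq Kt$ implies cubic-dominant); the cubic-dominant set may perfectly well contain points below the line. So the sentence ``the preceding integration applies on $[t_\ast,\infty)$'' does not follow: you never produce a starting point above the barrier, which is what the barrier argument requires.

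The gap is easy to close in two ways. Self-contained fix: at a cubic-dominant time with $\phi\leq Kt$ one has $\phi^{2/3}\geq \phi\,(Kt)^{-1/3}$, hence $\phi'\geq (2C_4)^{-2/3}t^{1/3}\phi^{2/3}\geq (2C_4)^{-2/3}K^{-1/3}\phi$; combined with (i), this gives $\phi'\geq c\phi$ at \emph{every} large time where $\phi\leq Kt$, so $\phi$ cannot remain below the line, producing the crossing point you need. Alternatively, and this is the paper's route: since $t\geq t_0>1$, hypothesis \eqref{lemma2-3} implies hypothesis \eqref{lemma2-1}, so part (a) already yields $\phi(t)\geq ct^3\gg Kt$; the paper then deduces $\phi'\geq \alpha_3 t^{7/3}$ and absorbs the linear term via $C_3\phi'\leq C_3\alpha_3^{-1/2}t^{-7/6}[\phi']^{3/2}\leq C t^{-1/2}[\phi']^{3/2}$, arriving at the pure inequality $\phi\leq C_5 t^{-1/2}[\phi']^{3/2}$ that you then integrate as you did. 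Either patch makes your part (b) complete.
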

	The proof for Case (a) of  Lemma \ref{PL} can be found in \cite{LS}. For readers' convenience, we give a short proof.
	\begin{proof} We prove the lemma case by case.

{\it Step 1. Proof for Case (a)}.	Assume that $\phi(t_1) > 0$ for some $t_1 \geq t_0$. According to \eqref{lemma2-1}, it holds that
		\be
		C_1 \phi^{\prime}(t) + C_2 \left[ \phi^{\prime}(t) \right]^{\frac32} \geq \phi(t) \geq  \phi (t_1) > 0 \ \ \ \ \ \text{for any}\,\, \ t \geq t_1.
		\ee
		Hence there exists a positive constant $\alpha_1$ such that
		\be \label{lemma2-5}
		\phi^{\prime} (t) > \alpha_1 \ \ \ \ \ \ \text{for any}\,\, \ t \geq t_1.
		\ee
		Taking \eqref{lemma2-5} into \eqref{lemma2-1} yields
		\be \label{lemma2-7}
		\phi(t) \leq ( C_1 \alpha_1^{-\frac12} + C_2 )\left[ \phi^{\prime} (t) \right]^{\frac32}\ \ \ \ \text{for any}\,\,\ t \geq t_1,
		\ee
		which implies \eqref{lemma2-2} finishes the proof for Case (a).
		
	{\it Step 2. Proof for Case (b)}. If $\phi(t)$ satisfies \eqref{lemma2-3}, as proved above, $\phi(t)$ grows at least in cubic order. Hence there exist a $t_2> t_0$ and a constant $\alpha_2$,  such that
		\be \label{lemma2-9}
		C_3 \phi^{\prime}(t) + C_4 t^{-\frac12} \left[ \phi^{\prime}(t) \right]^{\frac32}\geq \alpha_2 t^3 \ \ \ \ \text{for} \ \ \ t\geq t_2.
		\ee
		Hence for $t\geq t_2$,  either $C_3\phi'(t)\geq \frac{\alpha_2}{2}t^3$ or $C_4t^{-\frac{1}{2}}(\phi'(t))^{\frac{3}{2}}\geq \frac{\alpha_2}{2}t^3$. Therefore, there exists a positive constant $\alpha_3>0$ such that
		\be \label{lemma2-10}
		\phi^{\prime}(t) \geq \alpha_3 t^{\frac{7}{3}} \quad  \text{for} \ \ \ t\geq t_2.
		\ee
		Hence one has
		\be \label{lemma2-11}
		\phi(t) \leq C_3 \alpha_3^{-\frac12}   t^{-\frac{7}{6}} \left[ \phi^{\prime}(t) \right]^{\frac32}  + C_4 t^{-\frac12} \left[ \phi^{\prime}(t) \right]^{\frac32} \leq C_5 t^{-\frac12} \left[ \phi^{\prime}(t) \right]^{\frac32},
		\ee
		which implies \eqref{lemma2-4} and completes the proof for Case (b).
		
		{\it Step 3. Proof for Case (c)}.	Note that $Ct_0^{\alpha}\leq \frac{1}{2}\phi(t_0)$. Hence  there exists a $\delta>0$ such that
		\[
		\phi(t)\geq Ct^\alpha \quad \text{for any}\,\, t\in [t_0, t_0+\delta).
		\]
		Define
		\[
		\bar{t}={\sup}\{s: \phi(\tau)\geq C\tau^\alpha \,\, \text{for any}\,\, \tau\in (t_0, s)\}.
		\]
		Clearly, $\bar{t}\geq t_0+\delta>1$. Suppose that $\bar{t}<\infty$.  Then one has $\phi(\bar{t})=C\bar{t}^\alpha$.
		
		If $\phi'(\bar{t})>\frac{1}{4}\left(\frac{1}{\alpha C_1}-1\right)^2\left(\frac{C_1}{C_2}\right)^2$, then there exists a $\tilde{\delta}>0$ such that
		\[
		\phi'({t})>\frac{1}{4}\left(\frac{1}{\alpha C_1}-1\right)^2\left(\frac{C_1}{C_2}\right)^2\quad \text{for any}\,\, t\in [\bar{t}, \bar{t}+\tilde{\delta}).
		\]
		Hence for any $t\in (\bar{t}, \bar{t}+\tilde{\delta})$, one has
		\begin{equation*}
			\begin{aligned}
				\phi(t)=&\phi(\bar{t})+\int_{\bar{t}}^t \phi'(s)ds> \phi(\bar{t})+\int_{\bar{t}}^t\frac{1}{4}\left(\frac{1}{\alpha C_1}-1\right)^2 \left(\frac{C_1}{C_2}\right)^2 ds
				\geq  \phi(\bar{t})+\int_{\bar{t}}^t \alpha C ds\\
				=& C\bar{t}^\alpha +\int_{\bar{t}}^t C \alpha ds\geq  C\bar{t}^\alpha+\int_{\bar{t}}^t C \alpha s^{\alpha-1} ds =Ct^\alpha.
			\end{aligned}
		\end{equation*}
		This contradicts with the definition of $\bar{t}$.
		
		If $\phi'(\bar{t})\leq \frac{1}{4}\left(\frac{1}{\alpha C_1}-1\right)^2\left(\frac{C_1}{C_2}\right)^2$, then there exists a $\hat{\delta}>0$ such that
		\[
		\phi'({t})<\left(\frac{1}{\alpha C_1}-1\right)^2\left(\frac{C_1}{C_2}\right)^2\quad \text{for any}\,\, t\in [\bar{t}, \bar{t}+\hat{\delta}).
		\]
		Thus for any $t\in (\bar{t}, \bar{t}+\hat{\delta})$, one has
		\begin{equation*}
			\begin{aligned}
				\phi(t)\leq &C_1 t\phi'({t})+C_2t\Big(\phi'(t)\Big)^{\frac{3}{2}}<C_1 t\phi'(t)+C_2 t\phi'(t) \left(\frac{1}{\alpha C_1}-1\right)\frac{C_1}{C_2}\leq \frac{1}{\alpha}  t\phi'(t).
			\end{aligned}
		\end{equation*}
		This implies
		\begin{equation*}
			\ln\frac{\phi(t)}{\phi(\bar{t})}\geq \alpha \ln\frac{t}{\bar{t}}.
		\end{equation*}
		Hence for any $t\in (\bar{t}, \bar{t}+\hat{\delta})$, it holds that
		\[
		\phi(t)\geq \phi(\bar{t})\left(\frac{t}{\bar{t}}\right)^{\alpha}=C\bar{t}^\alpha \bar{t}^{-\alpha}t^\alpha= Ct^\alpha.
		\]
		This also leads to a  contradiction with the definition of $\bar{t}$. Therefore, $\bar{t}=+\infty$ and the proof for Case (c) is completed.		
	\end{proof}

	The following lemma shows that the bounded solutions of Navier-Stokes system in a slab with no-slip boundary conditions or periodic boundary conditions have bounded gradient.

	\begin{lemma}\label{bounded-gradient}
		Let $\Bu$ be a bounded smooth solution to the Navier-Stokes system \eqref{SNS} in $\mathbb{R}^2\times (0,1)$ supplemented with no-slip boundary conditions or in $\mathbb{R}^2\times \mathbb{T}$. Then $\nabla \Bu$, $\nabla^2 \Bu$, and $\nabla P$ are also uniformly bounded.
	\end{lemma}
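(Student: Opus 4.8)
The plan is to treat the nonlinear system as a linear Stokes system with a prescribed right-hand side, and then run a bootstrap of local elliptic estimates, exploiting the fact that both the domain and the equations are invariant under horizontal translations. Using $\nabla\cdot\Bu=0$, I first rewrite the momentum equation as the Stokes system
\[
-\Delta\Bu+\nabla P=-\nabla\cdot(\Bu\otimes\Bu),\qquad \nabla\cdot\Bu=0 .
\]
Since $\Bu\in L^\infty$, the tensor $\Bu\otimes\Bu$ belongs to $L^\infty$ and hence to $L^p$ on any fixed ball, for every $p\in(1,\infty)$. The local $L^p$ theory for the Stokes operator, applied with the right-hand side in divergence form, then yields $\nabla\Bu\in L^p_{loc}$ and $P\in L^p_{loc}$ for all $p<\infty$; by Sobolev embedding $\Bu\in C^{0,\gamma}_{loc}$.

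Next I bootstrap to bounded second derivatives and pressure gradient. Writing the nonlinearity in non-divergence form, $(\Bu\cdot\nabla)\Bu$ is the product of the bounded field $\Bu$ with $\nabla\Bu\in L^p_{loc}$, hence lies in $L^p_{loc}$ for every $p$. The local $W^{2,p}$ theory for the Stokes system then gives $\Bu\in W^{2,p}_{loc}$ and $P\in W^{1,p}_{loc}$. Choosing $p>3$ and using $W^{2,p}\hookrightarrow C^{1,\gamma}$, I obtain $\nabla\Bu\in L^\infty_{loc}$, and consequently $(\Bu\cdot\nabla)\Bu\in C^{0,\gamma}_{loc}$. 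A final application of the Schauder estimates for the Stokes system upgrades this to $\Bu\in C^{2,\gamma}_{loc}$ and $P\in C^{1,\gamma}_{loc}$, so that $\nabla^2\Bu$ and $\nabla P$ are locally bounded. Near the flat boundaries $x_3=0,1$ the no-slip condition supplies the Dirichlet data needed for the corresponding boundary $L^p$ and Schauder estimates.

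To promote these local bounds to uniform ones, I note that each estimate is carried out on a ball (in the interior) or a half-ball (at the boundary) of a fixed radius, and its constant depends only on that radius, the dimension, the exponents $p,\gamma$, and $\|\Bu\|_{L^\infty(\Omega)}$, but not on the center. Since $\Omega$ and the system are invariant under translations in $(x_1,x_2)$, and the slab has finite, fixed height, a finite number of such balls covers a neighborhood of every point with a constant independent of the point. This yields the uniform bounds on $\nabla\Bu$, $\nabla^2\Bu$, and $\nabla P$ asserted in the lemma. In the periodic case $\mathbb{R}^2\times\mathbb{T}$ there is no boundary, so only the interior estimates above are required.

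The delicate point is the pressure: the local Stokes estimates determine $P$ only up to an additive constant, so the bootstrap must be phrased so that only $\nabla P$ (and not $P$ itself) is estimated, and one must check that the estimate constants are genuinely translation-invariant. Arranging the two regularity increments so that the nonlinearity lands in exactly the space required at each stage, together with handling the boundary estimates using the no-slip data, is where the main care is needed; the underlying Stokes $L^p$ and Schauder theory is otherwise standard.
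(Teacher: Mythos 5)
Your proposal is correct and follows essentially the same route as the paper: the paper likewise runs a local regularity bootstrap on balls and half-balls of fixed radius centered at arbitrary points (citing Galdi's interior estimates, Theorems IV.4.1 and IV.4.4, and boundary estimates under the no-slip data, Theorems IV.5.1 and IV.5.3), obtains uniformity from the translation invariance of the resulting constants, and recovers the bound on $\nabla P$ from the equation once $\nabla \Bu$ and $\nabla^2 \Bu$ are controlled. The only difference is presentational: you unpack the Stokes $L^p$ and Schauder bootstrap by hand, whereas the paper invokes the pre-packaged local Navier--Stokes regularity theorems that encapsulate exactly this argument.
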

	\begin{proof} The proof is divided into two steps.
		
		{\it	Step 1. interior regularity.}	According to  \cite[Theorem IV.4.1, Theorem IV.4.4, and Remark IV.4.2]{Galdi}, it holds that  for any $\Bx\in \mathbb{R}^2\times [\frac{1}{8}, \frac{7}{8}]$,
		\be \label{bound-gradient-1}
		\|\nabla \Bu \|_{L^4(\mathcal{B}_{\frac{5}{64}}(\Bx) )} \leq C \| \Bu\|_{L^8(\mathcal{B}_{\frac{3}{32}}(\Bx))}^2 +C \|\Bu \|_{L^4(\mathcal{B}_{\frac{3}{32}}(\Bx))} \leq C .
		\ee
		Moreover, one has
		\be \label{bound-gradient-2}
		\|\nabla \Bu \|_{W^{1, 4} (\mathcal{B}_{\frac{1}{16}}(\Bx)) } \leq C \| \nabla \Bu \|_{L^4(\mathcal{B}_{\frac{5}{64}}(\Bx)) } \| \Bu \|_{L^\infty(\mathcal{B}_{\frac{5}{64}}(\Bx))} + C \|\Bu \|_{W^{1, 4}(\mathcal{B}_{\frac{5}{64}}(\Bx))} \leq C .
		\ee
     \be \label{bound-gradient-3}
     \|\nabla^2 \Bu \|_{W^{1, 4} ( \mathcal{B}_{\frac{1}{32}} (\Bx)) } \leq C \| \Bu \cdot \nabla \Bu \|_{W^{1, 4}(\mathcal{B}_{\frac{1}{16}} (\Bx) )} 
     + C \|\Bu \|_{W^{2, 4} ( \mathcal{B}_{\frac{1}{16}} (\Bx))} \leq C. 
     \ee

		{\it Step 2. boundary regularity.} According to  \cite[Theorem IV.5.1, Theorem IV.5.3, and Remark IV.5.2]{Galdi}, it holds that
		\be
		\|\nabla \Bu \|_{L^4(\mathcal{B}_{\frac{5}{6}} (\Bx) \cap \Omega )} \leq C \| \Bu \|_{L^8(\mathcal{B}_{\frac{7}{8}} (\Bx) \cap \Omega )}^2 +C\|\Bu\|_{L^4(\mathcal{B}_{\frac{7}{8}} (\Bx) \cap \Omega)} \leq C, \ \  \text{for any}\, \Bx \in \partial \Omega.
		\ee
		Moreover, one has
		\be\label{bound-gradient-4}
		\| \nabla \Bu \|_{W^{1, 4} (\mathcal{B}_{\frac{3}{4}} (\Bx ) \cap \Omega )} \leq C \|\nabla \Bu \|_{L^4 (\mathcal{B}_{\frac{5}{6}} (\Bx) \cap \Omega ) }\|\Bu \|_{L^\infty (\mathcal{B}_{\frac{5}{6}} (\Bx) \cap \Omega ) } + C \|\Bu\|_{W^{1, 4} (\mathcal{B}_{\frac{5}{6}} (\Bx) \cap \Omega )   } \leq C .
		\ee

\be \label{bound-gradient-5}
\|\nabla^2 \Bu\|_{W^{1, 4} (\mathcal{B}_{\frac{1}{2}} (\Bx ) \cap \Omega ) } \leq C \|\Bu \cdot \nabla \Bu\|_{W^{1, 4} (\mathcal{B}_{\frac{3}{4}} (\Bx ) \cap \Omega )} + C \|\Bu\|_{W^{2, 4} (\mathcal{B}_{\frac{3}{4}} (\Bx) \cap \Omega )   } \leq C .
\ee
		Combining \eqref{bound-gradient-1}-\eqref{bound-gradient-4}  with Sobolev embedding inequality gives the uniform bound of $\nabla \Bu$, $\nabla^2 \Bu$, and hence the uniform bound of $\nabla P$.  The proof for Lemma \ref{bounded-gradient} is completed.
	\end{proof}

	\section{Axisymmetric solutions with no-slip boundary conditions}\label{Sec3}
	In this section, we deal with the axisymmtric solutions of the Navier-Stokes system in a slab with no-slip boundary conditions. The benefit of the no-slip boundary conditions is that Poincar\'{e} inequality
	\begin{equation}\label{Pinequality}
		\|\Bu\|_{L^2(\mathcal{O}_R)} \leq C \|\partial_z \Bu \|_{L^2(\mathcal{O}_R)}
	\end{equation}
	holds, where $C$ is a universal constant.  We will make use of this fact frequently. Furthermore, the axisymmetry of $\Bu$ also plays an important role, since it leads t improved estimates for the pressure term as indicated in \eqref{BetterEst}.
	\begin{proof}[{Proof of Theorem \ref{mainresult1}} ] The proof is divided into three steps.
		
		{\it Step 1. Set up}.  Assume that $\Bu$ is a smooth solution to \eqref{SNS} in $\Omega=\mathbb{R}^2\times (0,1)$ with no-slip boundary conditions. Multiplying the first equation in \eqref{SNS} by $\varphi_R(r) \Bu$ and integrating by parts one obtains
		\be \label{estimate1} \ba
		&\int_{\Omega} |\nabla \Bu|^2 \varphi_R =- \int_{\Omega} \nabla \varphi_R \cdot \nabla \Bu \cdot \Bu + \int_{\Omega} \frac12 |\Bu|^2\Bu \cdot \nabla \varphi_R  + \int_{\Omega}  P \Bu \cdot  \nabla \varphi_R   .
		\ea
		\ee
		Note that
		\be \label{proof-7}
		\int_{\Omega} P \Bu \cdot \nabla \varphi_R
		= 2\pi \int_0^1 \int_{R-1}^R P u^r r \, dr dz.
		\ee	
		Clearly, the last equation in \eqref{NScylindrical} for axisymmetric solutions can be reduced to
		\be \nonumber
		\partial_r ( r u^r)  + \partial_z ( ru^z ) = 0.
		\ee
		Hence for every fixed $r \geq 0$, one has
		\be
		\partial_r \int_0^1 r u^r \, dz  = - \int_0^1 \partial_z (r u^z) \, dz  = 0.
		\ee
		Thus it holds that
		\be
		\int_0^1 r u^r \, dz  = 0 \ \ \ \ \mbox{and}\ \ \ \ \ \int_0^1 \int_{R-1}^R ru^r \, dr dz = 0.
		\ee
		By virtue of Lemma \ref{Bogovskii}, there exists a vector valued function $\BP_R(r, z) \in H_0^1(D_R; \mathbb{R}^2)$ satisfying
		\be \label{proof-9}
		\partial_r \Psi_R^r + \partial_z \Psi_R^z =  r u^r
		\ee
		and
		\be \label{proof-10}
		\| \partial_r \BP_R \|_{L^2(D_R)} + \| \partial_z \BP_R \|_{L^2(D_R ) } \leq C \|r u^r \|_{L^2(D_R)} \leq C R^{\frac12}  \| u^r \|_{L^2(\mathcal{O}_R ) }.
		\ee

		Therefore, combining \eqref{proof-7} and \eqref{proof-9} one derives
		\be \label{proof-12}
		\int_{\Omega}  P \nabla\varphi_R\cdot  \Bu  =  \int_0^1 \int_{R-1}^R P \left( \partial_r \Psi_R^r + \partial_z \Psi_R^z  \right) \, drdz
		= - \int_0^1 \int_{R-1}^R (\partial_r P \Psi_R^r + \partial_z P \Psi_R^z ) \, drdz.
		\ee
		
		Since $\Bu$ is an axisymmetric solution of \eqref{NScylindrical}, the gradient  $(\partial_r P, \partial_z P)$ of the pressure satisfies
		\be \label{proof-15}
		\left\{
		\begin{aligned}
			&(u^r \partial_r + u^z \partial_z ) u^r - \frac{(u^\theta)^2}{ r} + \partial_r P = \left( \partial_r^2 + \frac1r \partial_r + \partial_z^2 - \frac{1}{r^2} \right) u^r,\\
			&(u^r \partial_r + u^z \partial_z ) u^z + \partial_z P = \left( \partial_r^2 + \frac1r \partial_r + \partial_z^2 \right) u^z.
		\end{aligned}
		\right.
		\ee
		Using  \eqref{proof-15} and integration by parts one obtains
		\be \label{proof-16}\ba
		& \int_0^1 \int_{R-1}^R \partial_r P \Psi_R^r \, drdz\\
		=&
		\int_0^1 \int_{R-1}^R \left( \partial_r^2 + \frac1r \partial_r + \partial_z^2 - \frac{1}{r^2} \right) u^r   \Psi_R^r \, drdz\\
		&\ \ \  - \int_0^1 \int_{R-1}^R \left[ (u^r \partial_r + u^z \partial_z )u^r - \frac{(u^\theta)^2}{r} \right]\Psi_R^r \, drdz \\
		=& - \int_0^1 \int_{R-1}^R ( \partial_r u^r \partial_r \Psi^r_R  + \partial_z u^r \partial_z \Psi_R^r  ) \, drdz + \int_0^1 \int_{R-1}^R \left( \frac1r \partial_r  - \frac{1}{r^2} \right) u^r   \Psi_R^r \, drdz  \\
		&\ \ \
		-\int_0^1 \int_{R-1}^R \left[ (u^r \partial_r + u^z \partial_z )u^r -
		\frac{(u^\theta)^2}{r} \right]\Psi_R^r \, drdz
		\ea \ee
		and
		\be \label{proof-16.5}\ba
		& \int_0^1 \int_{R-1}^R \partial_z P \Psi_R^z \, drdz\\
		=& - \int_0^1 \int_{R-1}^R ( \partial_r u^z \partial_r \Psi^z_R  + \partial_z u^z \partial_z \Psi_R^z  ) \, drdz + \int_0^1 \int_{R-1}^R \frac1r \partial_r u^r   \Psi_R^r \, drdz  \\
		&\ \ \
		-\int_0^1 \int_{R-1}^R  (u^r \partial_r + u^z \partial_z )u^z \Psi_R^z \, drdz
		\ea \ee
		
		{\it Step 2. Proof for Case (a) of Theorem \ref{mainresult1}.} Now we start to estimate the first two terms on the right hand side of \eqref{estimate1} and the terms on the right hand sides of \eqref{proof-16} and \eqref{proof-16.5}.
		By Poincar\'e inequality \eqref{Pinequality} and Sobolev embedding inequality, one derives
		\be \label{proof-3}
		\left| \int_{\Omega} \nabla \varphi_R \cdot \nabla \Bu \cdot \Bu  \right|
		\leq C \|\nabla \Bu \|_{L^2(\mathcal{O}_R)} \|\Bu\|_{L^2(\mathcal{O}_R ) }
		\leq C \| \nabla \Bu \|_{L^2(\mathcal{O}_R)}^2
		\ee
		and
		\be \label{proof-5} \ba
		\left| \int_{\Omega} \frac12 |\Bu|^2 \Bu \cdot \nabla \varphi_R  \right|
		\leq C R \int_0^1 \int_{R-1}^R |\Bu|^3 \, dr dz
		\leq \,& C R \| (\partial_r, \partial_z )\Bu \|_{L^2(D_R)}^3\\
		\leq\,& C R^{-\frac12} \|\nabla \Bu\|_{L^2(\mathcal{O}_R) }^3.
		\ea
		\ee

		By \eqref{proof-10} and Poincar\'e inequality, one has
		\be  \label{proof-17}
		\ba
		\left|  \int_0^1 \int_{R-1}^R ( \partial_r u^r \partial_r \Psi^r_R  + \partial_z u^r \partial_z \Psi_R^r  ) \, drdz \right|  \leq\, & C R^{-\frac12} \| \nabla \Bu\|_{L^2(\mathcal{O}_R) } \| \tilde{\nabla }\BP_R\|_{L^2(D_R)} \\
		\leq\, & C \| \nabla \Bu\|_{L^2(\mathcal{O}_R) }^2
		\ea \ee
		and
		\be \label{proof-18} \ba
		\left| \int_0^1 \int_{R-1}^R \left( \frac1r \partial_r  - \frac{1}{r^2} \right) u^r   \Psi_R^r \, drdz \right|
		\leq\,& C R^{-1} \| \tilde{\nabla} \Bu\|_{L^2(D_R)}  \|\BP_R\|_{L^2(D_R)}\\
		\leq\, & CR^{-1} \| \nabla \Bu\|_{L^2(\mathcal{O}_R) }^2.
		\ea
		\ee
		Furthermore, it holds that
		\be  \label{proof-19} \ba
		& \left| \int_0^1 \int_{R-1}^R \left[ (u^r \partial_r + u^z \partial_z )u^r - \frac{(u^\theta)^2}{r}  \right]\Psi_R^r \, drdz \right| \\[2mm]
		\leq\, & C \| (u^r, u^z, u^\theta ) \|_{L^4(D_R)}  \left( \| \tilde{\nabla } u^r \|_{L^2(D_R)} + R^{-1}  \|u^\theta \|_{L^2(D_R)} \right) \|\BP_R \|_{L^4(D_R)}\\[2mm]
		\leq\, & C \| (u^r, u^\theta, u^z ) \|_{L^2(D_R)}^{\frac12} \| (u^r, u^\theta, u^z) \|_{H^1(D_R)}^{\frac12} \cdot   R^{-\frac12} \|\nabla \Bu \|_{L^2(\mathcal{O}_R ) } \cdot  \|\tilde{\nabla } \BP_R\|_{L^2(D_R)} \\[2mm]
		\leq \,& C \| \tilde{\nabla } (u^r, u^\theta, u^z ) \|_{L^2(D_R)} \cdot   R^{-\frac12} \|\nabla \Bu \|_{L^2(\mathcal{O}_R ) } \cdot  R^{\frac12} \|u^r \|_{L^2(\mathcal{O}_R)}\\
		\leq \,& CR^{-\frac12} \|\nabla \Bu \|_{L^2(\mathcal{O}_R ) } ^3.
		\ea
		\ee
		Combining the estimates \eqref{proof-17}-\eqref{proof-19} one arrives at
		\be \label{proof-20}
		\left| \int_0^1 \int_{R-1}^R \partial_r P \Psi_R^r \, drdz \right| \leq C \|\nabla \Bu \|_{L^2(\mathcal{O}_R ) } ^2 + C R^{-\frac12} \| \nabla \Bu \|_{L^2(\mathcal{O}_R ) } ^3.
		\ee
		Similarly, one has
		\be \label{proof-21}
		\left| \int_0^1 \int_{R-1}^R \partial_z P \Psi_R^z \, drdz \right| \leq C \|\nabla \Bu \|_{L^2(\mathcal{O}_R ) } ^2 + C R^{-\frac12} \| \nabla \Bu \|_{L^2(\mathcal{O}_R ) } ^3.
		\ee
		Now it can be shown that
		\be \label{proof-26}
		\int_{\Omega} |\nabla \Bu|^2 \varphi_R \leq C \|\nabla \Bu \|_{L^2(\mathcal{O}_R ) } ^2 + C R^{-\frac12} \| \nabla \Bu \|_{L^2(\mathcal{O}_R ) } ^3.
		\ee
		
		Let
		\be \label{localenergy}
		Y(R) = \int_{0}^1\int_{\mathbb{R}^2} |\nabla \Bu(x)|^2 \varphi_R\left(\sqrt{x_1^2+x_2^2}\right) dx_1 dx_2 dx_3.
		\ee
		Note that for any three-dimensional function $\Bu$,  straightforward computations give
		\be \label{localenergy2}
		{Y}(R) =  \int_0^1 \int_0^{2\pi} \left( \int_0^{R-1} |\nabla \Bu|^2 r \, dr   +
		\int_{R-1}^R |\nabla \Bu|^2 (R-r) r \, dr \right) d\theta dz
		\ee
		and
		\be
		Y^{\prime} (R) = \int_{\mathcal{O}_R} |\nabla \Bu|^2 dx,
		\ee
		where the explicit form of $\varphi_R$ in \eqref{cut-off} has been used. For the axisymmetric solutions, $Y(R)$ in \eqref{localenergy2} can be reduced to
		\[
		{Y}(R) = 2\pi \int_0^1  \left( \int_0^{R-1} |\nabla \Bu|^2 r \, dr   +
		\int_{R-1}^R |\nabla \Bu|^2 (R-r) r \, dr \right) dz.
		\]
		Hence the estimate \eqref{proof-26} can be written as
		\[
		Y(R) \leq C Y^\prime (R) + C R^{-\frac12} \left( Y^{\prime} (R)\right)^{\frac32}.
		\]
		Then  Case (a)  of Theorem \ref{mainresult1}  follows from Lemma \ref{PL}.

		{\it Step 3.  Proof for Case (b) of Theorem \ref{mainresult1}.}\,
		We start from  \eqref{estimate1} and need to estimate the first two terms on the right hand side of \eqref{estimate1} and the terms on the right hand sides of \eqref{proof-16} and \eqref{proof-16.5} in a different way.
		Using  Poincar\'e inequality yields
		\be \label{coro1-2}
		\left|  \int_{\Omega} \nabla \varphi_R \cdot \nabla \Bu \cdot \Bu \right|
		\leq C \| \Bu \|_{L^2(\mathcal{O}_R)} \|\nabla \Bu \|_{L^2(\mathcal{O}_R)} \leq C R^{\frac12} \| \Bu \|_{L^\infty(\mathcal{O}_R)} \|\nabla \Bu \|_{L^2(\mathcal{O}_R)}
		\ee
		and
		\be \label{coro1-3} \ba
		\left| \int_{\Omega} \frac12 |\Bu|^2\Bu \cdot \nabla \varphi_R  \right|
		& \leq C R \| \Bu \|_{L^\infty(\mathcal{O}_R)}^2 \int_0^1 \int_{R-1}^R |(u^r, u^\theta, u^z) | \, dr dz \\
		& \leq C R  \| \Bu \|_{L^\infty(\mathcal{O}_R)}^2\left( \int_0^1 \int_{R-1}^R | (u^r, u^\theta, u^z) |^2 \, dr dz \right)^{\frac12}  \\
		& \leq C  R  \| \Bu \|_{L^\infty(\mathcal{O}_R)}^2  \| \partial_z \Bu \|_{L^2(D_R)} \leq C R^{\frac12}  \| \Bu \|_{L^\infty(\mathcal{O}_R)}^2 \| \nabla \Bu \|_{L^2(\mathcal{O}_R)}.
		\ea \ee
		
		According to \eqref{proof-10}, instead of \eqref{proof-17} and \eqref{proof-18}, one has
		\be \label{coro1-7} \ba
		& \left| \int_0^1 \int_{R-1}^R ( \partial_r u^r \partial_r \Psi^r_R  + \partial_z u^r \partial_z \Psi_R^r  ) \, drdz  \right|
		\leq C \| (\partial_r, \partial_z) u^r \|_{L^2(D_R)} \|  (\partial_r, \partial_z) \Psi_R^r \|_{L^2(D_R)} \\
		\leq\, & C R^{-\frac12}  \| \nabla \Bu \|_{L^2(\mathcal{O}_R) } \cdot  R^{\frac12} \| u^r \|_{L^2(\mathcal{O}_R)}  \leq C R^{\frac12} \| \Bu \|_{L^\infty(\mathcal{O}_R ) } \| \nabla \Bu \|_{L^2(\mathcal{O}_R)}
		\ea \ee
		and
		\be \label{coro1-8} \ba
		& \left|  \int_0^1 \int_{R-1}^R \left( \frac1r \partial_r  - \frac{1}{r^2} \right) u^r   \Psi_R^r \, drdz  \right| \\
		\leq\, & C R^{-1} \| \partial_r u^r \|_{L^2(D_R) }\| \Psi_R^r \|_{L^2(D_R) } + C R^{-2} \| u^r\|_{L^2(D_R)} \| \Psi_R^r \|_{L^2(D_R) } \\
		\leq\, & C R^{-\frac32} \| \nabla \Bu \|_{L^2(\mathcal{O}_R ) } \cdot R^{\frac12} \|u^r \|_{L^2(\mathcal{O}_R)} + CR^{-\frac52} \| \partial_z u^r \|_{L^2(\mathcal{O}_R)}  \cdot R^{\frac12} \|u^r \|_{L^2(\mathcal{O}_R)}\\
		\leq\, & C R^{-\frac12}\| \Bu \|_{L^\infty(\mathcal{O}_R ) } \| \nabla \Bu \|_{L^2(\mathcal{O}_R)} .
		\ea
		\ee
		Similarly, it holds that
		\be \label{coro1-9}\ba
		& \left| \int_0^1 \int_{R-1}^R \left[ (u^r \partial_r + u^z \partial_z )u^r -
		\frac{(u^\theta)^2}{r} \right]\Psi_R^r \, drdz \right| \\
		\leq\, & C \| \Bu \|_{L^\infty(\mathcal{O}_R ) } \| \tilde{\nabla } u^r  \|_{L^2(D_R)} \| \Psi_R^r \|_{L^2(D_R)} + CR^{-1} \| \Bu \|_{L^\infty(\mathcal{O}_R ) } \|u^\theta \|_{L^2(D_R)} \| \Psi_R^r \|_{L^2(D_R)} \\
		\leq\, & C R^{\frac12}\| \Bu \|_{L^\infty(\mathcal{O}_R ) }^2 \|\nabla \Bu \|_{L^2(\mathcal{O}_R ) } .
		\ea \ee
		Combining \eqref{coro1-7}-\eqref{coro1-9} one arrives at
		\be \label{coro1-10}
		\left| \int_0^1 \int_{R-1}^R \partial_r P \Psi_R^r \, drdz \right| \leq C R^{\frac12}\left( \| \Bu \|_{L^\infty(\mathcal{O}_R ) } + \| \Bu \|_{L^\infty(\mathcal{O}_R ) }^2 \right) \| \nabla \Bu \|_{L^2(\mathcal{O}_R ) } .
		\ee
		The similar computations show that
		\be \label{coro1-11}
		\left| \int_0^1 \int_{R-1}^R \partial_z P \Psi_R^z \, drdz \right| \leq C R^{\frac12}\left( \| \Bu \|_{L^\infty(\mathcal{O}_R ) } + \| \Bu \|_{L^\infty(\mathcal{O}_R ) }^2 \right)   \| \nabla \Bu \|_{L^2(\mathcal{O}_R ) } .
		\ee
		Therefore, one has
		\be \label{coro1-12}
		Y(R) \leq C R^{\frac12}\left( \| \Bu \|_{L^\infty(\mathcal{O}_R ) } + \| \Bu \|_{L^\infty(\mathcal{O}_R ) }^2 \right)  \left[ Y^{\prime}(R) \right]^{\frac12},
		\ee
		where $Y(R)$ is defined in \eqref{localenergy}.
		
		Suppose $\Bu$ is not identically equal to zero and $\Bu$ satisfies \eqref{growth-coro1}. For any small $\epsilon >0$, there exists a $R_0(\epsilon) >2$ such that
		\be \nonumber
		\| \Bu \|_{L^\infty(\mathcal{O}_R ) } \leq \epsilon R \ \ \ \ \ \ \text{for any}\,\,  R\geq R_0(\epsilon).
		\ee
		Hence the inequality  \eqref{coro1-12} implies that
		\be \label{coro1-14}
		\frac{Y^{\prime}(R) }{ [Y(R)]^2} \geq (C\epsilon)^{-2} R^{-5}.
		\ee
	If $\Bu$ is not equal to zero, according to Case (a) of Theorem \ref{mainresult1}, $Y(R)$ must be unbounded as $R\rightarrow + \infty$.
		For every $R$ sufficiently large, integrating \eqref{coro1-14} over $[R, + \infty)$ one arrives at
		\be \label{coro1-15}
		\frac{1}{Y(R)}  \geq  \frac14 (C \epsilon)^{-2} R^{-4}.
		\ee
Since $\epsilon$ can be arbitrarily small,	this implies  \eqref{growth} and leads to a contradiction with the assumption that $\Bu$ is not identically zero.    Hence the proof of Theorem \ref{mainresult1} is completed.
	\end{proof}



	\section{General 3D solutions in a slab with no-slip boundary conditions}\label{Sec4}
	This section is devoted to the study for general solutions of the Navier-Stokes system \eqref{SNS} in a slab with no-slip boundary conditions. Since $\partial_\theta P$ does not have the same scaling as $\partial_r P$ and $\partial_z P$, this makes $\partial_\theta P$ a  troublesome term for the estimate,  whereas it does not appear in the axisymmetric setting. The key ideas to deal with other terms in general case are almost the same as that for the axisymmetric case.
	\begin{proof}[{Proof of Theorem \ref{mainresult4}}] The proof contains three steps.
		
		{\it  Step 1. Set up.}\ Assume that $\Bu$ is a smooth solution to the Navier-Stokes system \eqref{SNS}.
		The equality  \eqref{estimate1} still holds.
			Instead of \eqref{proof-7}, one has
		\be \label{proof4-7}
		\int_{\Omega} P \Bu \cdot \nabla \varphi_R
		=  \int_0^1  \int_0^{2\pi} \int_{R-1}^R P u^r r \, dr  d\theta dz .
		\ee		
			It follows from the last equation in \eqref{NScylindrical} (divergence free equation) that
	 for every fixed $r\geq 0$, one has
		\be \label{proof4-10}
		\partial_r \int_0^1 \int_0^{2\pi } ru^r \, d\theta dz = - \int_0^1 \int_0^{2\pi} \partial_\theta u^\theta \, d\theta dz - \int_0^1 \int_0^{2\pi}
		\partial_z (r u^z) \, d\theta dz = 0.
		\ee
		And then it holds that
		\be \label{proof4-12}
		\int_0^1 \int_0^{2\pi} r u^r \, d\theta dz = 0\ \ \ \  \text{and}\quad \int_0^1 \int_0^{2\pi} \int_{R-1}^R r u^r \, dr d\theta  dz =0.
		\ee
		By virtue of Lemma \ref{Bogovskii}, there exists a vector valued function $\BP_R(r, \theta, z)\in H_0^1(\mathcal{D}_R; \mathbb{R}^3) $ satisfying
		\be \label{proof4-13}
		\partial_r \Psi_R^r + \partial_\theta \Psi_R^\theta + \partial_z \Psi_R^z  = ru^r\ \ \ \ \mbox{in}\ \mathcal{D}_R
		\ee
		and
		\be \label{proof4-15}\ba
		&\|\partial_r \BP_R\|_{L^2(\mathcal{D}_R ) } + \|\partial_\theta \BP_R\|_{L^2(\mathcal{D}_R)} + \|\partial_z \BP_R\|_{L^2(\mathcal{D}_R)}
		\leq C  \|r u^r \|_{L^2(\mathcal{D}_R)} \leq C  R^{\frac12} \| u^r \|_{L^2(\mathcal{O}_R)}.
		\ea \ee
		Combining \eqref{proof4-7} and \eqref{proof4-13} one obtains
		\be \label{proof4-16} \ba
		\int_{\Omega}  P \Bu \cdot \nabla \varphi_R  &  =    \int_0^1 \int_0^{2\pi} \int_{R-1}^R P \left( \partial_r \Psi_R^r +  \partial_\theta \Psi_R^\theta + \partial_z \Psi_R^z  \right) \, dr d\theta dz \\
		& = - \int_0^1 \int_0^{2\pi} \int_{R-1}^R (\partial_r P \Psi_R^r + \partial_\theta P \Psi_R^\theta +  \partial_z P \Psi_R^z ) \, dr d\theta dz.
		\ea \ee
		Furthermore, it follows from the momentum equations in \eqref{NScylindrical} that one has
		\be \label{proof4-20}
		\ba
		&\int_0^1 \int_0^{2\pi} \int_{R-1}^R \partial_r P \Psi_R^r \, dr d\theta dz \\
		=& - \int_0^1 \int_0^{2\pi} \int_{R-1}^R \left(\partial_r u^r \partial_r \Psi_R^r + \partial_z u^r \partial_z \Psi_R^r +  \frac{1}{r^2} \partial_\theta u^r \partial_\theta \Psi_R^r \right)  \, drd\theta dz  \\
		&+ \int_0^1 \int_0^{2\pi} \int_{R-1}^R \left[\left(  \frac{1}{r} \partial_r - \frac{1}{r^2}    \right) u^r +   \frac{(u^\theta)^2}{r} - \frac{2}{r^2} \partial_\theta u^\theta \right]\Psi_R^r \, dr d\theta dz \\
		& - \int_0^1 \int_0^{2\pi} \int_{R-1}^R \left(u^r \partial_r + \frac{u^\theta}{r}  \partial_\theta  + u^z \partial_z \right) u^r \Psi_R^r \, dr d\theta  dz ,
		\ea
		\ee
\be \label{proof4-30}
\ba
&\int_0^1 \int_0^{2\pi} \int_{R-1}^R \partial_\theta P \Psi_R^\theta  \, dr d\theta dz \\
=\, & - \int_0^1 \int_0^{2\pi} \int_{R-1}^R  \left[ r(\partial_r u^\theta \partial_r \Psi_R^\theta + \partial_z u^\theta \partial_z \Psi_R^\theta) +  r^{-1} \partial_\theta u^\theta \partial_\theta \Psi_R^\theta \right] \, drd\theta dz  \\
& + \int_0^1 \int_0^{2\pi} \int_{R-1}^R \left[- \frac{1}{r}   u^\theta - u^\theta u^r  + \frac{2}{r} \partial_\theta u^r \right]\Psi_R^\theta  \, dr d\theta dz \\
& - \int_0^1 \int_0^{2\pi} \int_{R-1}^R r \left(u^r \partial_r + \frac{1}{r} u^\theta \partial_\theta  + u^z \partial_z \right) u^\theta \Psi_R^\theta  \, dr d\theta  dz ,
\ea
\ee	
and	
	\be \label{proof4-20.5}
\ba
&\int_0^1 \int_0^{2\pi} \int_{R-1}^R \partial_z P \Psi_R^z \, dr d\theta dz \\
=& - \int_0^1 \int_0^{2\pi} \int_{R-1}^R \left(\partial_r u^z \partial_r \Psi_R^z + \partial_z u^z \partial_z \Psi_R^z +  \frac{1}{r^2} \partial_\theta u^z \partial_\theta \Psi_R^z \right)  \, drd\theta dz  \\
& - \int_0^1 \int_0^{2\pi} \int_{R-1}^R \left(u^r \partial_r + \frac{u^\theta}{r}  \partial_\theta  + u^z \partial_z -\frac{1}{r}\partial_r \right) u^z \Psi_R^z \, dr d\theta  dz .
\ea
\ee
		
{\it Step 2. Proof for Case (a) of Theorem \ref{mainresult4}.}	Now we estimate  the first two terms on the right hand side of \eqref{estimate1} and the terms on the right hand sides of \eqref{proof4-20}--\eqref{proof4-20.5}. First,  Poincar\'e inequality and Sobolev embedding inequality lead to
	\be \label{proof4-4.5}
\left|  \int_{\Omega} \nabla \varphi_R \cdot \nabla \Bu \cdot \Bu \right|
\leq  \| \Bu \|_{L^2(\mathcal{O}_R)} \|\nabla \Bu \|_{L^2(\mathcal{O}_R)} \leq    \|\nabla \Bu \|^2_{L^2(\mathcal{O}_R)}
\ee
and
		\be \label{proof4-5} \ba
		\left| \int_{\Omega} \frac12 |\Bu|^2 \Bu \cdot \nabla \varphi_R  \right|
		&\leq C R \int_0^1 \int_0^{2\pi}  \int_{R-1}^R  |\Bu|^3 \, dr d\theta  dz  \\
		& \leq C R  \left\| (u^r, u^\theta, u^z) \right\|_{L^2(\mathcal{D}_R ) }^{\frac32}  \left\| \bar{\nabla} ( u^r, u^\theta, u^z ) \right\|_{L^2(\mathcal{D}_R )}^{\frac32} \\
		& \leq C R\cdot  R^{-\frac34} \|\Bu\|_{L^2(\mathcal{O}_R )}^{\frac32}\cdot R^{\frac34} \| \nabla \Bu\|_{L^2(\mathcal{O}_R)}^{\frac32} \\
		& \leq C R \|\nabla \Bu\|_{L^2(\mathcal{O}_R )}^3.
		\ea
		\ee
	
		Using \eqref{proof4-15} one obtains
		\be \label{proof4-21}
		\ba
		& \left| \int_0^1 \int_0^{2\pi} \int_{R-1}^R \left(\partial_r u^r \partial_r \Psi_R^r + \partial_z u^r \partial_z \Psi_R^r +  \frac{1}{r^2} \partial_\theta u^r \partial_\theta \Psi_R^r \right) \, drd\theta dz \right| \\
		\leq\, & C \| ( \partial_r, r^{-1} \partial_\theta, \partial_z ) u^r \|_{L^2(\mathcal{D}_R)} \| ( \partial_r,  \partial_\theta, \partial_z ) \Psi_R^r\|_{L^2(\mathcal{D}_R)}\\
		\leq\, & C R^{-\frac12} \| \nabla \Bu \|_{L^2(\mathcal{O}_R)} \cdot R^{\frac12} \| u^r \|_{L^2(\mathcal{O}_R)} \\
		\leq\, & C  \|\nabla \Bu \|_{L^2(\mathcal{O}_R)}^2
		\ea
		\ee
		and
		\be \label{proof4-22} \ba
		& \left|  \int_0^1 \int_0^{2\pi} \int_{R-1}^R \left[ \left( \frac{1}{r} \partial_r   - \frac{1}{r^2}       \right)u^r + \frac{2}{r^2}\partial_\theta u^\theta \right]  \Psi_R^r  \, dr d\theta dz      \right| \\
		\leq\, &   C \left(  R^{-1} \| \partial_r u^r \|_{L^2(\mathcal{D}_R)} + R^{-2} \| u^r \|_{L^2(\mathcal{D}_R)} + R^{-1} \|r^{-1} \partial_\theta u^\theta \|_{L^2(\mathcal{D}_R)}\right) \| \Psi_R^r \|_{L^2(\mathcal{D}_R)} \\
		\leq\, & C ( R^{-\frac32} + R^{-\frac52}   ) \|\nabla \Bu\|_{L^2(\mathcal{O}_R)}  \cdot   \| \partial_z \Psi_R^r \|_{L^2(\mathcal{D}_R)} \\
		\leq\, & C ( R^{-\frac32} + R^{-\frac52}  ) \|\nabla \Bu\|_{L^2(\mathcal{O}_R)}  \cdot   R^{\frac12} \|u^r \|_{L^2(\mathcal{O}_R)} \\
		\leq\, & C R^{-1}    \| \nabla \Bu\|_{L^2(\mathcal{O}_R ) }^2.
		\ea \ee
		 By  H\"{o}lder inequality it can be shown that
		\be \label{proof4-23}
		\ba
		&  \left| \int_0^1 \int_0^{2\pi} \int_{R-1}^R \left(u^r \partial_r + \frac{1}{r} u^\theta \partial_\theta  + u^z \partial_z \right) u^r \Psi_R^r \, dr d\theta  dz \right| \\
		\leq\, & C  \| (u^r, u^\theta, u^z) \|_{L^3(\mathcal{D}_R) } \| (\partial_r, r^{-1} \partial_\theta , \partial_z )u^r \|_{L^2(\mathcal{D}_R)}
		\| \Psi_R^r \|_{L^6(\mathcal{D}_R)}.
		\ea
		\ee
		It follows from Gagliardo-Nirenberg inequality and Poincar\'{e} inequality that
		\be \label{proof4-24}
		\ba
		& \| (u^r, u^\theta, u^z ) \|_{L^3(\mathcal{D}_R )} \\
		\leq\, & C \| (u^r, u^\theta, u^z ) \|_{L^2(\mathcal{D}_R)}^{\frac12} \| (u^r, u^\theta, u^z ) \|_{H^1(\mathcal{D}_R)}^{\frac12} \\
		\leq\,& C   \| \partial_z (u^r, u^\theta, u^z ) \|_{L^2(\mathcal{D}_R)}^{\frac12} \left(  \| \partial_z (u^r, u^\theta, u^z)  \|_{L^2(\mathcal{D}_R)}
		+ \| \bar{\nabla } (u^r, u^\theta, u^z ) \|_{L^2(\mathcal{D}_R)}      \right)^{\frac12}\\
		\leq\, & C  R^{-\frac14} \| \nabla \Bu \|_{L^2(\mathcal{O}_R)}^{\frac12} \left(   R^{-\frac12} \| \nabla \Bu \|_{L^2(\mathcal{O}_R)} +
		R^{\frac12}\| \nabla \Bu \|_{L^2(\mathcal{O}_R)}   \right)^{\frac12} \\
		\leq\, & C    \| \nabla \Bu \|_{L^2(\mathcal{O}_R)}
		\ea
		\ee
		and
		\be \label{proof4-25}
		\| \BP_R\|_{L^6(\mathcal{D}_R)} \leq C \|\bar{\nabla } \BP_R \|_{L^2(\mathcal{D}_R ) } \leq C R^{\frac12}\|u^r \|_{L^2(\mathcal{O}_R )}
		\leq C R^{\frac12} \|\nabla \Bu \|_{L^2(\mathcal{O}_R )}.
		\ee
		Putting \eqref{proof4-24}-\eqref{proof4-25} into \eqref{proof4-23} one derives
		\be \label{proof4-25-1}
		\left| \int_0^1 \int_0^{2\pi} \int_{R-1}^R \left(u^r \partial_r + \frac{1}{r} u^\theta \partial_\theta  + u^z \partial_z \right) u^r \Psi_R^r \, dr d\theta  dz \right| \leq C   \| \nabla \Bu \|_{L^2(\mathcal{O}_R )}^3.
		\ee
		Similarly, it follows from \eqref{proof4-24}-\eqref{proof4-25} that
		\begin{align}
			\begin{aligned}
				\label{proof4-26}
				&\left| \int_0^1 \int_0^{2\pi} \int_{R-1}^R \frac{(u^\theta)^2}{r}  \Psi_R^r \, dr d\theta  dz \right|
				\leq CR^{-1} \|u^\theta \|_{L^3(\mathcal{D}_R)} \|u^\theta \|_{L^2(\mathcal{D}_R)} \| \Psi_R^r \|_{L^6 (\mathcal{D}_R )}
				\\
				\leq\, & C R^{-1} \|\nabla \Bu \|_{L^2(\mathcal{O}_R)} \cdot \|\partial _z u^\theta\|_{L^2(\mathcal{D}_R)} \cdot R^{\frac12} \|\nabla \Bu \|_{L^2(\mathcal{O}_R)}
				\\
				\leq\, & C R^{-1} \|\nabla \Bu \|_{L^2(\mathcal{O}_R)} \cdot R^{-\frac12}\|\nabla \Bu\|_{L^2(\mathcal{O}_R)} \cdot R^{\frac12} \|\nabla \Bu \|_{L^2(\mathcal{O}_R)}
				\\
				\leq\, & R^{-1} \|\nabla \Bu \|_{L^2(\mathcal{O}_R)}^3.
			\end{aligned}
		\end{align}
		Combining the estimates \eqref{proof4-21}-\eqref{proof4-26} one obtains
		\be \label{proof4-27}  \left| \int_0^1 \int_0^{2\pi} \int_{R-1}^R \partial_r P \Psi_R^r \, dr d\theta dz \right|
		\leq C  \| \nabla \Bu \|_{L^2(\mathcal{O}_R )}^2 + C  \| \nabla \Bu\|_{L^2(\mathcal{O}_R)}^3.
		\ee

		By making use of the estimates \eqref{proof4-15} and  \eqref{proof4-24}-\eqref{proof4-25}  it can be shown that
		\be \label{proof4-31}
		\ba
		&  \int_0^1 \int_0^{2\pi} \int_{R-1}^R \left[ r(\partial_r u^\theta \partial_r \Psi_R^\theta + \partial_z u^\theta \partial_z \Psi_R^\theta) +  r^{-1} \partial_\theta u^\theta \partial_\theta \Psi_R^\theta \right] \, drd\theta dz \\
		\leq\, & C R \| (\partial_r u^\theta, \partial_z u^\theta, r^{-2} \partial_\theta  u^\theta )\|_{L^2(\mathcal{D}_R)} \| \bar{\nabla} \Psi_R^\theta \|_{L^2(\mathcal{D}_R)} \\
		\leq\, & C R^{\frac12} \| \nabla \Bu \|_{L^2(\mathcal{O}_R)}\cdot R^{\frac12} \| u^r \|_{L^2(\mathcal{O}_R) }  \\
		\leq\, & C R \|\nabla \Bu \|_{L^2(\mathcal{O}_R)}^2
		\ea
		\ee
		and
		\be \label{proof4-32}
		\ba
		& \left| \int_0^1 \int_0^{2\pi} \int_{R-1}^R \left[ - \frac{1}{r}     u^\theta  + \frac{2}{r} \partial_\theta u^r \right]\Psi_R^\theta  \, dr d\theta dz \right|\\
		\leq\, & C  \left(  R^{-1} \| u^\theta \|_{L^2(\mathcal{D}_R)} + \|r^{-1} \partial_\theta u^r \|_{L^2(\mathcal{D}_R)} \right) \| \Psi_R^\theta \|_{L^2(\mathcal{D}_R)}\\
		\leq\, & C \left( R^{-\frac32 } \|u^\theta \|_{L^2 (\mathcal{O}_R)} + R^{-\frac12} \| \nabla \Bu \|_{L^2(\mathcal{O}_R ) } \right)  \cdot  R^{\frac12} \| u^r \|_{L^2(\mathcal{O}_R) }\\
		\leq\, & C  \|\nabla \Bu \|_{L^2(\mathcal{O}_R )}^2 .
		\ea
		\ee
		Furthermore, one has
		\be \label{proof4-33} \ba
		\left| \int_0^1 \int_0^{2\pi} \int_{R-1}^R  u^r u^{\theta} \Psi_R^{\theta}   \, dr d\theta dz \right|
		\leq & C \|u^r \|_{L^3(\mathcal{D}_R)} \|u^\theta \|_{L^2(\mathcal{D}_R)} \| \Psi_R^\theta \|_{L^6(\mathcal{D}_R)} \\
		\leq & C   \| \nabla \Bu\|_{L^2(\mathcal{O}_R)} \cdot  R^{-\frac12} \| \nabla \Bu \|_{L^2(\mathcal{O}_R)} \cdot R^{\frac12} \| u^r \|_{L^2(\mathcal{O}_R)}\\
		\leq & C  \| \nabla \Bu\|_{L^2(\mathcal{O}_R)}^3
		\ea  \ee
		and
		\be  \label{proof4-35} \ba
		& \left| \int_0^1 \int_0^{2\pi} \int_{R-1}^R \left(r u^r \partial_r +  u^\theta \partial_\theta  + r u^z \partial_z \right) u^\theta \Psi_R^\theta  \, dr d\theta  dz \right| \\
		\leq\, & C R \| (u^r, u^\theta, u^z) \|_{L^3(\mathcal{D}_R)} \| (\partial_r, r^{-1} \partial_\theta, \partial_z )u^\theta \|_{L^2(\mathcal{D}_R)} \| \Psi_R^\theta \|_{L^6(\mathcal{D}_R )} \\
		\leq\, & C R  \| \nabla \Bu \|_{L^2(\mathcal{O}_R)}^3.
		\ea
		\ee
		Combining the estimates \eqref{proof4-31}-\eqref{proof4-35} one arrives at
		\be \label{proof4-38}
		\left| \int_0^1 \int_0^{2\pi} \int_{R-1}^R \partial_\theta P \Psi_R^\theta  \, dr d\theta dz \right|
		\leq C R \| \nabla \Bu \|_{L^2(\mathcal{O}_R)}^2 + C R   \| \nabla \Bu\|_{L^2(\mathcal{O}_R)}^3.  \ee
		
		Similarly, it can be proved that
		\be \label{proof4-50}  \left| \int_0^1 \int_0^{2\pi} \int_{R-1}^R \partial_z P \Psi_R^z  \, dr d\theta dz \right|
		\leq C\| \nabla \Bu \|_{L^2(\mathcal{O}_R )}^2 + C  \| \nabla \Bu\|_{L^2(\mathcal{O}_R)}^3.
		\ee

		The above computations imply
		\be \label{proof4-51}
		{Y}(R) \leq C_1 R {Y}^{\prime}(R) + C_2 R \left[ {Y}^{\prime}(R) \right]^{\frac32}.
		\ee
		It follows from Case (c) of Lemma \ref{PL} that if $Y(R)$ is not identically zero, then for any $\alpha\in (0, \frac{1}{C_1})$, there exists a constant $C>0$ such that
		\[
		Y(R)\geq CR^\alpha.
		\]
		Hence if $\varliminf _{R\to\infty}(Y(R) R^{-\alpha})= 0$, then $Y(R)$ must be identically zero. This implies that $\nabla \Bu\equiv 0$ and thus $\Bu\equiv 0$. Hence the proof for Case (a) of Theorem  \ref{mainresult4} is completed.

		{\it Step 3.  Proof for Case (b) of Theorem  \ref{mainresult4}.}\,   We estimate  the first two terms on the right hand side of \eqref{estimate1} and the terms on the right hand sides of \eqref{proof4-20}--\eqref{proof4-20.5} in a different way.
		Using Poincar\'e inequality and Sobolev embedding inequality we obtain
		\be \label{proof5-1}
		\left| \int_{\Omega} \nabla \varphi_R \cdot \nabla \Bu \cdot \Bu  \right|
		\leq C \|\nabla \Bu \|_{L^2(\mathcal{O}_R)} \|\Bu\|_{L^2(\mathcal{O}_R ) }
		\leq C  R^{\frac12} \|\Bu\|_{L^\infty (\mathcal{O}_R)} \| \nabla \Bu \|_{L^2(\mathcal{O}_R)}
		\ee
		and
		\be \label{proof5-5}
		\begin{aligned}
			 \left| \int_{\Omega}  \frac12 |\Bu|^2 \Bu \cdot \nabla \varphi_R \right|
		\leq \,& C \|u^r\|_{L^\infty (\mathcal{O}_R )}
		\|\Bu\|_{L^2 (\mathcal{O}_R )}^2 \\
		\leq \, & CR^{\frac12}\|u^r\|_{L^\infty (\mathcal{O}_R )}
		\|\Bu\|_{L^\infty (\mathcal{O}_R )}
		\|\nabla \Bu\|_{L^2 (\mathcal{O}_R )}.
	\end{aligned}
		\ee
	
		By virtue of \eqref{proof4-15}, one has
		\be \label{proof5-21}
		\ba
		& \left| \int_0^1 \int_0^{2\pi} \int_{R-1}^R \left(\partial_r u^r \partial_r \Psi_R^r + \partial_z u^r \partial_z \Psi_R^r +  \frac{1}{r^2} \partial_\theta u^r \partial_\theta \Psi_R^r \right) \, drd\theta dz \right| \\
		\leq\, & C \| ( \partial_r, r^{-1} \partial_\theta, \partial_z ) u^r \|_{L^2(\mathcal{D}_R)} \| ( \partial_r,  \partial_\theta, \partial_z ) \Psi_R^r\|_{L^2(\mathcal{D}_R)}\\
		\leq\, & C  \| \nabla \Bu \|_{L^2(\mathcal{O}_R)}  \| u^r \|_{L^2(\mathcal{O}_R)} \leq C R^{\frac12} \|u^r\|_{L^\infty (\mathcal{O}_R)}\| \nabla \Bu \|_{L^2(\mathcal{O}_R)}
		\ea
		\ee
		and
		\be \label{proof5-22} \ba
		& \left|  \int_0^1 \int_0^{2\pi} \int_{R-1}^R \left[ \left( \frac{1}{r} \partial_r   - \frac{1}{r^2}       \right)u^r + \frac{2}{r^2}\partial_\theta u^\theta \right]  \Psi_R^r  \, dr d\theta dz      \right| \\
		\leq\, &   C \left(  R^{-1} \| \partial_r u^r \|_{L^2(\mathcal{D}_R)} + R^{-2} \| u^r \|_{L^2(\mathcal{D}_R)} + R^{-1} \|r^{-1} \partial_\theta u^\theta \|_{L^2(\mathcal{D}_R)}\right) \| \Psi_R^r \|_{L^2(\mathcal{D}_R)} \\
		\leq\, & C ( R^{-\frac32} + R^{-\frac52}   ) \|\nabla \Bu\|_{L^2(\mathcal{O}_R)}  \cdot   \| \partial_z \Psi_R^r \|_{L^2(\mathcal{D}_R)} \\
		\leq\, & C R^{-1}    \| \nabla \Bu\|_{L^2(\mathcal{O}_R ) } \|u^r\|_{L^2(\mathcal{O}_R )}
		\leq C R^{-\frac12} \|u^r\|_{L^\infty (\mathcal{O}_R)} \|\nabla \Bu \|_{L^2(\mathcal{O}_R)} .
		\ea \ee
		Furthermore, it holds that
		\be \label{proof5-23}
		\ba
		&  \left| \int_0^1 \int_0^{2\pi} \int_{R-1}^R \left(u^r \partial_r + \frac{1}{r} u^\theta \partial_\theta  + u^z \partial_z \right) u^r \Psi_R^r \, dr d\theta  dz \right| \\
		\leq\, & C  \| (u^r, u^\theta, u^z) \|_{L^\infty (\mathcal{D}_R) } \| (\partial_r, r^{-1} \partial_\theta , \partial_z )u^r \|_{L^2(\mathcal{D}_R)}
		\| \Psi_R^r \|_{L^2 (\mathcal{D}_R)}\\
		\leq\, &  C \|\Bu\|_{L^\infty (\mathcal{O}_R)} \| \nabla \Bu \|_{L^2(\mathcal{O}_R)}  \|u^r \|_{L^2(\mathcal{O}_R)}
		\leq C R^{\frac12} \|\Bu\|_{L^\infty (\mathcal{O}_R)}
		\|u^r\|_{L^\infty (\mathcal{O}_R)}
		\| \nabla \Bu \|_{L^2(\mathcal{O}_R)}
		\ea
		\ee
		and
		\be \label{proof5-27}
		\ba
		\left| \int_0^1 \int_0^{2\pi} \int_{R-1}^R \frac{(u^\theta)^2}{r}  \Psi_R^r \, dr d\theta  dz \right|
		&\leq CR^{-1} \|\Bu\|_{L^\infty (\mathcal{O}_R)}  \|u^\theta \|_{L^2(\mathcal{D}_R) } \| \Psi_R^r \|_{L^2(\mathcal{D}_R )}
		\\
		& \leq CR^{-1}\|\Bu\|_{L^\infty (\mathcal{O}_R)}  \|\nabla \Bu \|_{L^2(\mathcal{O}_R )} \| u^r \|_{L^2(\mathcal{O}_R )}
		\\
		&\leq C R^{-\frac12} \|\Bu\|_{L^\infty (\mathcal{O}_R)}
		\| u^r \|_{L^\infty(\mathcal{O}_R )}
		\| \nabla \Bu\|_{L^2(\mathcal{O}_R ) }.
		\ea
		\ee
		Combining the estimates \eqref{proof5-21}-\eqref{proof5-27} one arrives at
		\be \label{proof5-28}   \left| \int_0^1 \int_0^{2\pi} \int_{R-1}^R \partial_r P \Psi_R^r \, dr d\theta dz \right|
		\leq
		C R^{\frac12}(1+\|\Bu\|_{L^\infty (\mathcal{O}_R)})\|u^r\|_{L^\infty (\mathcal{O}_R)} \|\nabla \Bu \|_{L^2(\mathcal{O}_R )} .
		\ee

	Furthermore,  straightforward computations lead to
		\be \label{proof5-31}
		\ba
		&  \int_0^1 \int_0^{2\pi} \int_{R-1}^R \left[ r(\partial_r u^\theta \partial_r \Psi_R^\theta + \partial_z u^\theta \partial_z \Psi_R^\theta) +  r^{-1} \partial_\theta u^\theta \partial_\theta \Psi_R^\theta \right] \, drd\theta dz \\
		\leq\, & C R \| (\partial_r u^\theta, \partial_z u^\theta, r^{-2} \partial_\theta  u^\theta) \|_{L^2(\mathcal{D}_R)} \| \bar{\nabla} \Psi_R^\theta \|_{L^2(\mathcal{D}_R)} \\
		\leq\, & C R  \| \nabla \Bu \|_{L^2(\mathcal{O}_R)}  \| u^r \|_{L^2(\mathcal{O}_R) }
		\leq C R^{\frac32} \|u^r\|_{L^\infty(\mathcal{O}_R)}\| \nabla \Bu \|_{L^2(\mathcal{O}_R)}
		\ea
		\ee
		and
		\be \label{proof5-32}
		\ba
		& \left| \int_0^1 \int_0^{2\pi} \int_{R-1}^R \left(  - \frac{1}{r}    u^\theta  + \frac{2}{r} \partial_\theta u^r \right)\Psi_R^\theta  \, dr d\theta dz \right|\\
		\leq\, & C  \left( R^{-1} \| u^\theta \|_{L^2(\mathcal{D}_R)} + \|r^{-1} \partial_\theta u^r \|_{L^2(\mathcal{D}_R)} \right) \| \Psi_R^\theta \|_{L^2(\mathcal{D}_R)}\\
		\leq\, & C   \|\nabla \Bu \|_{L^2(\mathcal{O}_R )} \|u^r \|_{L^2 (\mathcal{O}_R)} \leq C R^{\frac32} \|u^r\|_{L^\infty(\mathcal{O}_R)} \| \nabla \Bu \|_{L^2(\mathcal{O}_R)} .
		\ea
		\ee
		By H\"{o}lder inequality and \eqref{proof4-15} one derives
		\be \label{proof5-33} \ba
		\left| \int_0^1 \int_0^{2\pi} \int_{R-1}^R  u^r u^{\theta} \Psi_R^{\theta}   \, dr d\theta dz \right|
		\leq\, & C \|u^r \|_{L^\infty (\mathcal{D}_R ) }  \|u^\theta \|_{L^2(\mathcal{D}_R)} \| \Psi_R^\theta \|_{L^2(\mathcal{D}_R)} \\
		\leq\, & C \|u^r\|_{L^\infty(\mathcal{O}_R)} \| \nabla \Bu\|_{L^2(\mathcal{O}_R)}  \| u^r \|_{L^2(\mathcal{O}_R)}
		\\
		\leq& C R^{\frac12} \|u^r\|_{L^\infty (\mathcal{O}_R)}^2  \| \nabla \Bu \|_{L^2(\mathcal{O}_R)}
		\ea  \ee
		and
		\be  \label{proof5-35} \ba
		& \left| \int_0^1 \int_0^{2\pi} \int_{R-1}^R \left(r u^r \partial_r +  u^\theta \partial_\theta  + r u^z \partial_z \right) u^\theta \Psi_R^\theta  \, dr d\theta  dz \right| \\
		\leq\, & C R \| \Bu \|_{L^\infty (\mathcal{O}_R)} \| (\partial_r, r^{-1} \partial_\theta, \partial_z )u^\theta \|_{L^2(\mathcal{D}_R)} \| \Psi_R^\theta \|_{L^2 (\mathcal{D}_R )} \\
		\leq\, & C R\| \Bu \|_{L^\infty (\mathcal{O}_R)}     \| \nabla \Bu \|_{L^2(\mathcal{O}_R)}  \| u^r \|_{L^2(\mathcal{O}_R )}
		\\
		\leq\, & C R^{\frac32} \| \Bu \|_{L^\infty (\mathcal{O}_R)}\| u^r \|_{L^\infty (\mathcal{O}_R)}  \| \nabla \Bu \|_{L^2(\mathcal{O}_R)}.
		\ea
		\ee
		Combining the estimates \eqref{proof5-31}-\eqref{proof5-35} one obtains
		\be \label{proof5-38}
		\begin{aligned}
		&\left| \int_0^1 \int_0^{2\pi} \int_{R-1}^R \partial_\theta P \Psi_R^\theta  \, dr d\theta dz \right|
		\leq   C R^{\frac32}
		(1+ \| \Bu \|_{L^\infty (\mathcal{O}_R)})
		\| u^r \|_{L^\infty (\mathcal{O}_R)}
		\|\nabla \Bu \|_{L^2(\mathcal{O}_R)}.
			\end{aligned}
		\ee
		Similarly, it can be proved that
		\be \label{proof5-50}
		\left| \int_0^1 \int_0^{2\pi} \int_{R-1}^R \partial_z P \Psi_R^z  \, dr d\theta dz \right|
		\leq  C R^{\frac12}(1+\|\Bu\|_{L^\infty (\mathcal{O}_R)})\|u^r\|_{L^\infty (\mathcal{O}_R)} \|\nabla \Bu \|_{L^2(\mathcal{O}_R )}.
		\ee
		
		Collecting the above computations one has
		\begin{equation} \label{proof5-51}
			{Y}(R)  \leq
			C
			\left( R^{\frac12}\|\Bu\|_{L^\infty(\mathcal{O}_R)}
			+
			R^{\frac32}
			(1+  \|\Bu\|_{L^\infty (\mathcal{O}_R)}) \|u^r\|_{L^\infty(\mathcal{O}_R)} \right)
			\left( {Y}^{\prime}(R) \right)^{\frac12} ,
		\end{equation}
		where ${Y}(R)$ is defined in \eqref{localenergy}.
		
	Suppose that  there exist $\beta\in [0, \frac{\alpha}{2}]$ and $C>0$  such that the conditions in \eqref{condthm1.2} hold.  Then for any $\varepsilon>0$, there exists a $R_0 (\varepsilon)>2$ such that
		\begin{equation*}
			\|\Bu\|_{L^\infty (\mathcal{O}_R)} \leq \varepsilon R^{\beta}\quad \text{and}\quad 	\|u^r \|_{L^\infty (\mathcal{O}_R)}\leq C R^{-(1+\beta-\frac{\alpha}{2})} \quad\text{for all }R\geq R_0 (\varepsilon).
		\end{equation*}
		Therefore, inequality \eqref{proof5-51} implies
		\begin{equation*}
			Y(R) \leq C \varepsilon R^{\frac{1+\alpha}{2}} (Y'(R))^{\frac12} \quad\text{for all }R\geq R_0 (\varepsilon).
		\end{equation*}
		
		We assume $\Bu$ is not identically equal to zero.  Then $Y(R)>0$ for large $R$ and it follows that
		\begin{equation*}
			\frac{1}{C \varepsilon} R^{-(1+\alpha)} \leq \frac{Y'}{Y^2}.
		\end{equation*}
		Integrating it over an interval $(R_1,R_2)$ for large $R_1$, we obtain
		\begin{equation*}
			\frac{1}{-\alpha\varepsilon C} (R_2^{-\alpha} -R_1^{-\alpha}) \leq - \frac{1}{Y(R_2)} + \frac{1}{Y(R_1)}.
		\end{equation*}
		According to Part (a) of Theorem \ref{mainresult4}, as $R_2\to\infty$, it holds that $Y(R_2)\to\infty$. Hence
		\begin{equation*}
			\frac{1}{\alpha \varepsilon C} R_1 ^{-\alpha} \leq \frac{1}{Y(R_1)}.
		\end{equation*}
		Therefore,
		\begin{equation*}
			Y(R_1) \leq \alpha \varepsilon C R_1^{\alpha}.
		\end{equation*}
		As $\varepsilon>0$ is chosen arbitrarily, by Part (a) of Theorem \ref{mainresult4}, it follows that $\Bu\equiv 0$, which is a contradiction. Therefore, $\Bu$ is a constant and thus $\Bu\equiv 0$. This completes the proof of Case (b) of Theorem \ref{mainresult4}.
	\end{proof}



Now we are in position to prove Theorem \ref{mainresult5}, where Lemma \ref{bounded-gradient} helps to guarantee the uniform boundedness of gradient of velocity field.
	
	\begin{proof}[{Proof for Theorem \ref{mainresult5}}] Assume that $\Bu$ is a smooth solution to the Navier-Stokes system \eqref{SNS}.
		Taking the $x_i$-derivative($i=1, 2$) of the first equation in \eqref{SNS} one obtains
		\be \label{main5-1}
		-\Delta \partial_{x_i} \Bu + (\partial_{x_i} \Bu \cdot \nabla) \Bu + (\Bu \cdot \nabla ) \partial_{x_i} \Bu + \nabla \partial_{x_i} P = 0.
		\ee
		Multiplying the equation in \eqref{main5-1} by $\varphi_R(r) \partial_{x_i} \Bu$ and integrating over $\Omega$ one derives
		\be \label{main5-2} \ba
		&\int_\Omega |\nabla \partial_{x_i} \Bu|^2 \varphi_R + \int_\Omega (\nabla \varphi_R \cdot \nabla \partial_{x_i} \Bu) \cdot \partial_{x_i} \Bu +\int_\Omega (\partial_{x_i} \Bu \cdot \nabla)\Bu \cdot \partial_{x_i} \Bu \varphi_R \\
		=\, & \frac12 \int_\Omega (\Bu \cdot \nabla \varphi_R) |\partial_{x_i} \Bu |^2 + \int_\Omega \partial_{x_i} P \partial_{x_i} \Bu \cdot \nabla \varphi_R.
		\ea  \ee
		It follows from Poincar\'e inequality that
		\be \label{main5-3}
		\left|  \int_\Omega (\nabla \varphi_R \cdot \nabla \partial_{x_i} \Bu) \cdot \partial_{x_i} \Bu   \right| \leq  \| \nabla \partial_{x_i} \Bu \|_{L^2(\mathcal{O}_R)} \| \partial_{x_i} \Bu\|_{L^2(\mathcal{O}_R)} \leq C R^{\frac12} \| \nabla \partial_{x_i} \Bu \|_{L^2(\mathcal{O}_R)}.
		\ee
		Integration by parts yields
		\be \label{main5-4}
		\int_\Omega (\partial_{x_i} \Bu \cdot \nabla)\Bu \cdot \partial_{x_i} \Bu \varphi_R = -\int_\Omega (\partial_{x_i} \Bu \cdot \nabla) \partial_{x_i} \Bu \cdot \Bu \varphi_R -
		\int_\Omega (\partial_{x_i} \Bu \cdot \nabla \varphi_R) (\Bu \cdot \partial_{x_i} \Bu ).
		\ee
		In fact, one has
		\be \label{main5-5} \ba
		\left| \int_\Omega (\partial_{x_i} \Bu \cdot \nabla) \partial_{x_i} \Bu \cdot \Bu \varphi_R \right|   &\leq  \| \nabla \partial_{x_i} \Bu \sqrt{ \varphi_R} \|_{L^2(\Omega)} \|\partial_{x_i} \Bu \sqrt{\varphi_R } \|_{L^2(\Omega)} \|\Bu \|_{L^\infty(\Omega)} \\
		&\leq \|\nabla \partial_{x_i} \Bu \sqrt{ \varphi_R} \|_{L^2(\Omega)} \frac{1}{\pi} \|\partial_z \partial_{x_i} \Bu \sqrt{\varphi_R } \|_{L^2(\Omega)} \|\Bu \|_{L^\infty(\Omega)} \\
		& \leq \|\nabla \partial_{x_i} \Bu \sqrt{ \varphi_R} \|_{L^2(\Omega)}^2 \|\Bu \|_{L^\infty(\Omega)},
		\ea \ee
		where the Poincar'{e} inequality 
		\[
		\|\partial_{x_i} \Bu \sqrt{\varphi_R } \|_{L^2(\Omega)}\leq \frac{1}{\pi} \|\partial_z (\partial_{x_i} \Bu \sqrt{\varphi_R }) \|_{L^2(\Omega)} =  \frac{1}{\pi} \|\partial_z \partial_{x_i} \Bu \sqrt{\varphi_R } \|_{L^2(\Omega)} 
		\]
	has been used to get the second inequality.
		Furthermore, it holds that
		\be \label{main5-6} \ba
		\left| \int_\Omega (\partial_{x_i} \Bu \cdot \nabla \varphi_R) (\Bu \cdot \partial_{x_i} \Bu )\right|
		\leq \| \partial_{x_i} \Bu \|_{L^2(\mathcal{O}_R) }^2 \| \Bu \|_{L^\infty(\mathcal{O}_R)} \leq  C R^{\frac12} \|\nabla \partial_{x_i} \Bu\|_{L^2(\mathcal{O}_R)},
		\ea \ee
		where Lemma \ref{bounded-gradient} has been used to get the last inequality in \eqref{main5-6}. Similarly, one has
		\be \label{main5-7}
		\left| \int_\Omega \Bu \cdot \nabla \varphi_R |\partial_{x_i} \Bu |^2 \right| \leq \| \Bu\|_{L^\infty(\mathcal{O}_R)} \| \partial_{x_i} \Bu\|_{L^2(\mathcal{O}_R )}^2 \leq  C R^{\frac12} \|\nabla \partial_{x_i} \Bu\|_{L^2(\mathcal{O}_R)}.
		\ee
		Using the moment equation (the first equation in \eqref{SNS}) one obtains
		\be \label{main5-8} \ba
		&\int_\Omega \partial_{x_i} P \partial_{x_i} \Bu \cdot \nabla \varphi_R
		=\int_\Omega \Delta u^i \partial_{x_i} \Bu \cdot \nabla \varphi_R - \int_\Omega (\Bu \cdot \nabla u^i) \partial_{x_i} \Bu \cdot \nabla \varphi_R \\
		=\, & \int_\Omega (\partial_{x_1}^2 + \partial_{x_2}^2) u^i \partial_{x_i} \Bu \cdot \nabla \varphi_R - \int_\Omega \partial_z u^i \partial_z \partial_{x_i} \Bu \cdot \nabla  \varphi_R -  \int_\Omega (\Bu \cdot \nabla u^i) \partial_{x_i} \Bu \cdot \nabla \varphi_R.
		\ea \ee
		Consequently, one has
		\be \label{main5-9}
		\left| \int_\Omega \partial_{x_i} P \partial_{x_i} \Bu \cdot \nabla \varphi_R \right|
		\leq C R^{\frac12} \left( \| \nabla \partial_{x_1} \Bu \|_{L^2(\mathcal{O}_R)} + \|\nabla \partial_{x_2} \Bu\|_{L^2(\mathcal{O}_R)}\right) .
		\ee
		Since $\|\Bu\|_{L^\infty(\Omega)} <\pi$, it holds that
		\be \label{main5-10}
		\int_\Omega |\nabla \partial_{x_i} \Bu|^2 \varphi_R \leq C R^{\frac12} \left( \| \nabla \partial_{x_1} \Bu \|_{L^2(\mathcal{O}_R)} + \|\nabla \partial_{x_2} \Bu\|_{L^2(\mathcal{O}_R)}\right).
		\ee
		
		Define
		\be \nonumber
		Z(R) = \int_\Omega \left( |\nabla \partial_{x_1} \Bu|^2 + |\nabla \partial_{x_2} \Bu|^2 \right) \varphi_R\left(\sqrt{x_1^2+x_2^2}\right)\,dx.
		\ee
		Hence the estimate \eqref{main5-10} implies
		\be \label{main5-12}
		Z(R) \leq CR^{\frac12} Z^{\prime}(R)^{\frac12}.
		\ee
		Assume $\nabla \partial_{x_1}\Bu$ and $\nabla \partial_{x_2}\Bu$ are not identically equal to zero. Then $Z(R)>0$ for $R\geq R_0$ with $R_0>0$, and one has
		\begin{equation*}
			\frac{1}{CR}\leq \left( -\frac{1}{Z(R)}\right)'.
		\end{equation*}
		Integrating it over $(R_0,R)$ for large $R_0$ one arrives at
		\begin{equation*}
			\frac{1}{C}\ln{\frac{R}{R_0}} \leq -\frac{1}{Z(R)}+\frac{1}{Z(R_0)}\leq \frac{1}{Z(R_0)}.
		\end{equation*}
	This leads to a contradiction when $R$ is sufficiently large. Therefore,  $\nabla \partial_{x_1} \Bu = \nabla \partial_{x_2} \Bu \equiv 0$. Note that $\partial_{x_1}\Bu=\partial_{x_2}\Bu=0$ at the boundary $\mathbb{R}^2\times \{0, 1\}$.  Thus $\partial_{x_1} \Bu = \partial_{x_2} \Bu \equiv 0$. It follows from the divergence free property of $\Bu$ that $\partial_{x_3} u^3=0$. This, together with the no-slip boundary conditions, yields that
		\be \nonumber
		u^{1} = u^1(x_3), \ \  u^2 = u^2(x_3), \ \  \text{and}\,\, u^3 \equiv 0.
		\ee
		Hence Navier-Stokes system can be written as
		\be \nonumber
		\partial_z^2 u^1 + \partial_{x_1} P = \partial_z^2 u^2 + \partial_{x_2} P =\partial_{x_3} P= 0.
		\ee
		Taking the homogeneous boundary condition of $\Bu$ into consideration one derives
		\be \nonumber
		u^1 = c_1 x_3(1-x_3) \ \ \text{and} \ \ u^2 = c_2x_3(1-x_3) \ \ \ \ \text{for some}\,\, c_1, c_2\in \mathbb{R}.
		\ee
		This completes the proof of Theorem \ref{mainresult5}.
	\end{proof}

	\section{Liouville type theorem for flows in a periodic slab}\label{Sec5}
	
	This section is devoted to the proof of Theorem \ref{corollary2}. Before the detailed presentation for the proof, let us introduce several lemmas which give some important properties for solutions of Navier-Stokes system in $\mathbb{R}^2\times \mathbb{T}$. We first prove that the pressure is also periodic once the  solution $\Bu$ is uniformly bounded and periodic in one direction.
	\begin{lemma}\label{pressureperiodic-new} Let $\Bu$ be a bounded smooth solution to the Navier-Stokes system \eqref{SNS} in $\mathbb{R}^2 \times \mathbb{T}$. The pressure $P$ is also a periodic function with respect to $z$.
	\end{lemma}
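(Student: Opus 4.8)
The plan is to first show that $\na P$ is periodic in $z$, then to deduce that $P$ itself is periodic up to an additive constant $c$ per period, and finally to prove that $c=0$ by a growth argument. Throughout I take the period in $z$ to be, say, $1$.

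First I would use the momentum equation to write $\na P=\lap\Bu-(\Bu\cdot\na)\Bu$. Since $\Bu$, and hence all of its derivatives, is periodic in $z$, the right-hand side is periodic in $z$, so $\na P(x_1,x_2,z+1)=\na P(x_1,x_2,z)$. Consequently the function $Q(x_1,x_2,z):=P(x_1,x_2,z+1)-P(x_1,x_2,z)$ satisfies $\na Q\equiv 0$, whence $Q\equiv c$ for some constant $c\in\r$. It then remains only to show $c=0$.

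To compute $c$, I would integrate the third (axial) component of the momentum equation over one period in $z$. Since $\int_0^1\pa_z P\,dz=P(x_1,x_2,1)-P(x_1,x_2,0)=c$, substituting $\pa_z P=\lap u^z-(\Bu\cdot\na)u^z$ yields an expression for $c$ in terms of $z$-averages of $\Bu$. In this expression the term $\pa_z^2 u^z$ integrates to zero by periodicity, and, writing $(\Bu\cdot\na)u^z=\na\cdot(\Bu\,u^z)$ by means of $\na\cdot\Bu=0$, the $\pa_z$-part of the convection term integrates to zero as well. Denoting the $z$-average by an overline and setting $x'=(x_1,x_2)$, $\na_{x'}=(\pa_{x_1},\pa_{x_2})$, I would thus arrive at the planar divergence-form identity $c=\na_{x'}\cdot\BG$ with $\BG=\na_{x'}\overline{u^z}-(\overline{u^1u^z},\overline{u^2u^z})$. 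By Lemma \ref{bounded-gradient}, both $\Bu$ and $\na\Bu$ are uniformly bounded, so $\BG\in L^\infty$. Integrating $c=\na_{x'}\cdot\BG$ over the disk $B_R$ and applying the divergence theorem gives $c\,\pi R^2=\int_{\pa B_R}\BG\cdot\Bn\,dS$, whose absolute value is at most $2\pi R\,\|\BG\|_{L^\infty}$. Thus $|c|\le C R^{-1}$ for every $R\ge 2$; letting $R\to\infty$ forces $c=0$, and hence $P(x_1,x_2,z+1)=P(x_1,x_2,z)$, i.e.\ $P$ is periodic in $z$.

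The only genuine obstacle is this last step, namely excluding a true linear-in-$z$ growth of the pressure (a nonzero jump $c$ across one period). The crucial point is that, after averaging in $z$, the constant $c$ can be written as the planar divergence of a \emph{bounded} vector field; the mismatch between the area growth $R^2$ of the bulk integral and the perimeter growth $R$ of the boundary flux then annihilates $c$. All remaining steps are routine once the boundedness of $\Bu$ and $\na\Bu$ furnished by Lemma \ref{bounded-gradient} is available.
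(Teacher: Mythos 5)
Your proposal is correct and is essentially the paper's own argument: both first deduce that the jump $Q=P(\cdot,z+1)-P(\cdot,z)$ is a constant from periodicity of $\nabla P$, then average the axial momentum equation over a period and use the boundedness of $\Bu$ and $\nabla\Bu$ (Lemma \ref{bounded-gradient}) to express that constant as a planar divergence of a bounded field, killing it by the mismatch between $R^2$ bulk growth and $R$ boundary growth. The only cosmetic difference is that the paper additionally averages in $\theta$ and integrates the resulting radial ODE $\bigl(\partial_r+\tfrac1r\bigr)F(r)=2\pi Q_0$, whereas you apply the divergence theorem on disks $B_R$ directly; these are the same computation in polar versus Cartesian form.
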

	\begin{proof} Let $Q(r, \theta, z) = P(r, \theta,  z+ 1) - P(r, \theta,  z).$ Since $\Bu$ is periodic with respect to $z$, one has
		\be \nonumber
		\nabla Q = \nabla P(r, \theta, z+1) - \nabla P(r, \theta, z) = 0 .
		\ee
		This implies that $Q= Q_0$ for some constant $Q_0 \in \mathbb{R}$.
		
		Integrating the third equation in \eqref{NScylindrical} with respect to $(\theta, z)$ on $[0, 2\pi]\times [0,1]$ and integration by parts one obtains
		\be \label{pp-2}\ba
		&2\pi Q_0  =\int_0^1 \int_0^{2\pi}\partial_z P(r, \theta, z)d\theta dz & \\
		= \,&\int_0^1 \int_0^{2\pi} \left( \partial_r^2 u^z + \frac{1}{r} \partial_r u^z \right)  \, d\theta  dz -\int_0^1 \int_0^{2\pi} u^r \partial_r u^z-\left(\frac{1}{r}\partial_\theta u^\theta+\partial_z u^z\right)u^z \, d\theta dz\\
		=\, & \left( \partial_r + \frac{1}{r} \right) \int_0^1 \int_0^{2\pi} \partial_r u^z \, d\theta dz  - \left(  \partial_r + \frac{1}{r} \right) \int_0^1 \int_0^{2\pi} u^r u^z \, d\theta dz ,
		\ea \ee
		where the divergence free property of $\Bu$ (the last equation in \eqref{NScylindrical}) has been used to get the last equality.
		Hence one has
		\be \label{pp-3}
		r \int_0^1 \int_0^{2\pi} \partial_r u^z \, d\theta dz -  r \int_0^1 \int_0^{2\pi}  u^r u^z \, d\theta dz = \pi Q_0 r^2 + Q_1,
		\ee
		for some constant $Q_1 \in \mathbb{R}$.
		
		Meanwhile, since $\Bu$ is bounded in $\mathbb{R}^2\times \mathbb{T}$, it follows from Lemma \ref{bounded-gradient} that $\nabla \Bu$ is also  bounded in $\mathbb{R}^2\times \mathbb{T}$. Hence there exists a constant $C>0$ such that
		\be \nonumber
		\left| r \int_0^1 \int_0^{2\pi} \partial_r u^z \, d\theta dz -  r \int_0^1 \int_0^{2\pi} u^r u^z \, d\theta dz \right| \leq Cr.
		\ee
		Hence $Q_0=0$. This implies that $P$ is periodic with respect to $z$.
	\end{proof}

	\begin{remark}
		In fact, the pressure of the Navier-Stokes system \eqref{SNS} in a periodic slab may not be  periodic  when the velocity is periodic but not bounded. For example, $\bBu = r^2 \Be_z$ is a solution of Navier-Stokes equations in $\mathbb{R}^2 \times \mathbb{T}$, however, the associated pressure $P = 4z$ is not periodic with respect to $z$.
	\end{remark}


	The following lemma shows Liouville-type theorem for solutions of the Navier-Stokes system  in a periodic slab when the associated Dirichlet integral is finite.
	\begin{lemma}\label{periodic-Dirichlet}
		Let $\Bu$ be a bounded  smooth solution to the Navier-Stokes system \eqref{SNS} in $\mathbb{R}^2\times \mathbb{T}$. Then $\Bu = (0, 0, c)$, provided that $u^\theta $ is independent of $\theta$ and $\Bu$ has a finite Dirichlet integral in the slab, i.e.,
		\be \label{periodic-1}
		\int_{\mathbb{R}^2\times (0,1)} |\nabla \Bu|^2 < +\infty.
		\ee
	\end{lemma}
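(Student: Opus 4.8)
The plan is to carry the cutoff energy scheme of Sections \ref{Sec3}--\ref{Sec4} over to the periodic slab and to close it using the finiteness hypothesis \eqref{periodic-1}. Set $Y(R)$ as in \eqref{localenergy}, so that $Y^{\prime}(R)=\int_{\mathcal{O}_R}|\nabla\Bu|^2$; since \eqref{periodic-1} forces $Y^{\prime}\in L^1(2,\infty)$, one has $\varliminf_{R\to\infty}RY^{\prime}(R)=0$, and I would fix a sequence $R_k\to\infty$ with $R_kY^{\prime}(R_k)\to0$. Before estimating, I record two structural facts. First, multiplying the momentum equation by $\varphi_R\Bu$ and integrating by parts, the boundary contributions at $z=0,1$ now cancel by periodicity rather than by the no-slip condition, so identity \eqref{estimate1} holds verbatim. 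Second---and this is the point special to this case---averaging the divergence-free equation in \eqref{NScylindrical} over $z\in(0,1)$ kills $\langle\partial_z u^z\rangle$ by periodicity, while $\partial_\theta\langle u^\theta\rangle=0$ because $u^\theta$ is independent of $\theta$; hence $\partial_r\bigl(r\langle u^r\rangle\bigr)=0$, so $r\langle u^r\rangle$ is constant in $r$, and letting $r\to0$ together with the boundedness of $\Bu$ gives $\langle u^r\rangle\equiv0$, where $\langle\,\cdot\,\rangle=\int_0^1(\cdot)\,dz$.

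The identity $\langle u^r\rangle\equiv0$ is exactly the periodic replacement for the no-slip Poincar\'e inequality: since $u^r$ has zero $z$-average, Poincar\'e in $z$ yields $\|u^r\|_{L^2(\mathcal{O}_R)}\le C\|\partial_z u^r\|_{L^2(\mathcal{O}_R)}\le C[Y^{\prime}(R)]^{1/2}$, with no loss in $R$. As in the previous section the flux $\int_0^1\int_0^{2\pi}ru^r\,d\theta\,dz$ is constant in $r$ and vanishes as $r\to0$, so it is identically zero and the Bogovskii map of Lemma \ref{Bogovskii}(2) applies on each $\mathcal{D}_R$, producing $\BP_R$ with $\|\bar{\nabla}\BP_R\|_{L^2(\mathcal{D}_R)}\le CR^{1/2}\|u^r\|_{L^2(\mathcal{O}_R)}\le CR^{1/2}[Y^{\prime}(R)]^{1/2}$ as in \eqref{proof4-15}. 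I would then reproduce the pressure computation \eqref{proof4-16}, substituting the three momentum equations for $\partial_rP,\partial_\theta P,\partial_zP$ against $\BP_R$. The troublesome $\partial_\theta P$ contribution is tamed precisely by $u^\theta$ being $\theta$-independent: this annihilates the term $r^{-1}\partial_\theta u^\theta\,\partial_\theta\Psi_R^\theta$ in \eqref{proof4-30} and removes the swirl transport term, so that every surviving piece is controlled by the Bogovskii bound above, the $L^\infty$ bounds of Lemma \ref{bounded-gradient}, the $R$-independent volume of $\mathcal{D}_R$, and the gain $\|u^r\|_{L^2(\mathcal{O}_R)}\le C[Y^{\prime}]^{1/2}$. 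The outcome should be a Saint--Venant inequality of the form
\be \nonumber
Y(R)\le C\bigl(1+\|\Bu\|_{L^\infty}+\|\Bu\|_{L^\infty}^2\bigr)\Bigl(R^{1/2}\bigl[Y^{\prime}(R)\bigr]^{1/2}+R\,Y^{\prime}(R)\Bigr).
\ee

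Evaluating this along $R_k$, each term on the right is dominated by a nonnegative power of $R_kY^{\prime}(R_k)$ and hence tends to $0$, while the left side converges to $\int_{\mathbb{R}^2\times(0,1)}|\nabla\Bu|^2$; therefore $\nabla\Bu\equiv0$ and $\Bu$ is a constant Cartesian vector $(a,b,c)$. Finally, writing the swirl component of such a vector as $u^\theta=-a\sin\theta+b\cos\theta$ and using once more that $u^\theta$ is independent of $\theta$ forces $a=b=0$, so $\Bu=(0,0,c)$.

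I expect the main difficulty to lie in verifying that no term in the pressure estimate carries a factor worse than $R^{1/2}[Y^{\prime}]^{1/2}$ or $R\,Y^{\prime}$. In the periodic setting the absence of Poincar\'e for $u^\theta$ and $u^z$ means each such factor must be accounted for either by the identity $\langle u^r\rangle\equiv0$ (which controls the only component paired with $\nabla\varphi_R$ and tied to $\BP_R$ through the divergence structure), by the vanishing of $\partial_\theta u^\theta$, or by the fact that the $L^2$-norm over $\mathcal{D}_R$ of a bounded quantity stays $O(1)$ because $|\mathcal{D}_R|$ is independent of $R$. Keeping all these powers of $R$ in balance, so that the finite-Dirichlet subsequence $R_k$ closes the estimate, is the technical heart of the argument.
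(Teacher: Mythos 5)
Your overall scheme is sound and, once the gap described below is repaired, it proves the lemma; but it is genuinely different from the paper's route. The paper never has to face the term $\int \partial_\theta P\,\Psi_R^\theta$ at all: since $u^\theta$ is independent of $\theta$, the zero-flux identity $\int_0^1 r u^r\,dz=0$ holds for \emph{each fixed} $\theta$ (exactly as you derive it), so the paper applies the two-dimensional Bogovskii map of Lemma \ref{Bogovskii}(1) slice-by-slice in $\theta$, producing $\BP_{R,\theta}(r,z)\in H^1_0(D_R;\mathbb{R}^2)$ with $\partial_r\Psi^r_{R,\theta}+\partial_z\Psi^z_{R,\theta}=ru^r$; linearity of the Bogovskii operator then also controls $\partial_\theta\BP_{R,\theta}$ by $\|r\partial_\theta u^r\|_{L^2}$, which is what the term $\frac{1}{r^2}\partial_\theta u^r\,\partial_\theta\Psi^r_{R,\theta}$ requires, and it is there that the paper invokes the hypothesis \eqref{periodic-1} to absorb a quadratic quantity $\|\nabla\Bu\|_{L^2(\mathcal{O}_R)}^2$. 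The outcome is the cleaner inequality $Y(R)\le CR^{1/2}[Y'(R)]^{1/2}$, closed by the logarithmic ODE/contradiction argument from the proof of Theorem \ref{mainresult5}. You instead keep the three-dimensional Bogovskii field on $\mathcal{D}_R$ and estimate $\partial_\theta P$ head-on. I checked your worst terms (e.g. $\int r\,\partial_r u^\theta\,\partial_r\Psi_R^\theta$ and the transport term $\int r(u^r\partial_r+u^z\partial_z)u^\theta\,\Psi_R^\theta$): they are indeed $O(R\,Y'(R))$ once the loss-free Poincar\'e bound $\|u^r\|_{L^2(\mathcal{O}_R)}\le C[Y'(R)]^{1/2}$ is in hand, and that bound is exactly what your pointwise-in-$\theta$ identity $\int_0^1 u^r\,dz=0$ buys. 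Your closing step is also different and arguably simpler: you use $Y'\in L^1(2,\infty)$ to pick $R_k$ with $R_kY'(R_k)\to0$, which kills $R^{1/2}[Y']^{1/2}$ and $RY'$ simultaneously, and monotonicity of $Y$ then forces $\nabla\Bu\equiv0$. This tolerance of the extra $RY'$ term is precisely why your coarser Saint-Venant inequality still closes, whereas the paper's sharper inequality is what makes its ODE argument run; the two proofs use \eqref{periodic-1} in different places (you for the sequence extraction, the paper for the quadratic absorption).

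The genuine gap is at your very first step: you assert that identity \eqref{estimate1} ``holds verbatim'' because the boundary contributions at $z=0,1$ ``cancel by periodicity.'' Periodicity of $\Bu$ alone does not give this. In $\mathbb{R}^2\times\mathbb{T}$ only $\nabla P$ is periodic; $P$ itself may contain a part linear in $z$, i.e. $P(r,\theta,z+1)-P(r,\theta,z)=Q_0$ for a constant $Q_0$ that need not vanish --- the paper's remark following Lemma \ref{pressureperiodic-new} gives the explicit example $\Bu=r^2\Be_z$, $P=4z$. If $Q_0\neq0$, the integration by parts that produces \eqref{estimate1} leaves the uncancelled term $Q_0\int_{B_R}u^z\varphi_R\,dx_1dx_2$, which is of order $R^2$ and destroys every Saint-Venant estimate downstream. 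What rescues the argument is Lemma \ref{pressureperiodic-new}: for a \emph{bounded} solution (using Lemma \ref{bounded-gradient} to bound $\nabla\Bu$) one shows $Q_0=0$, and only then is \eqref{estimate1} legitimate in the periodic slab. So you must cite (or reprove) that lemma before writing your energy identity; with that insertion, your argument is complete.
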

	
	\begin{remark} In fact, the results in Lemma  \ref{periodic-Dirichlet} have been obtained in \cite{Pan}. Here we give a
		different and simpler proof, which contains some of the key ingredients for the analysis on
		general solutions whose Dirichlet integrals may not be finite.
	\end{remark}
	
	\begin{proof}[{Proof of Lemma \ref{periodic-Dirichlet}.]  The proof contains two steps.
		
		{\it 	Step 1. Set up.} Since $\Bu$ is a bounded smooth solution to \eqref{SNS} in $\mathbb{R}^2\times \mathbb{T}$, it follows from
			Lemma \ref{pressureperiodic-new} that the equality \eqref{estimate1} still holds.

			Due to the divergence free property of $\Bu$ and the fact that $u^\theta$ is independent of $\theta$, one has
			\be \nonumber
			\partial_r \int_0^1 r u^r \, dz = - \int_0^1 \partial_z (r u^z ) \, dz = 0, \ \ \ \ \text{for all} \,\, 0\leq r < \infty, \ \ 0\leq \theta \leq 2\pi.
			\ee
			This implies
			\be \label{lemma3-3}
			\int_0^1 r u^r \, dz =  0 \quad \text{and}\quad  \int_0^1 u^r \, dz =0.
			\ee
		It follows from   \eqref{lemma3-3} and Lemma \ref{Bogovskii} that for every fixed $\theta \in [0, 2\pi]$, there exists a vector valued function $\BP_{R, \theta}(r, z) \in H_0^1(D_R; \mathbb{R}^2)$ satisfying
			\be \label{lemma3-5}
			\partial_r \Psi_{R, \theta}^r + \partial_z \Psi_{R, \theta}^z =  r u^r
			\ee
			together with the estimate
			\begin{equation}\label{lemma3-5.1}
				\| \partial_r \BP_{R, \theta} \|_{L^2(D_R)} + \| \partial_z \BP_{R, \theta} \|_{L^2(D_R ) } \leq C \|r u^r \|_{L^2(D_R)},
			\end{equation}
			where $C$ is independent of $\theta$. It follows from 	   \eqref{lemma3-3} that
				\be \label{lemma3-3.5}
			\int_0^1 r \partial_\theta u^r \, dz =  0 .
			\ee
			Note that the Bogovskii map is a linear map (\hspace{1sp}\cite{Galdi}). Hence there is a universal constant $C>0$ such that
			\begin{equation}
				\label{lemma3-5-1}
				\|\partial_\theta \partial_r \BP_{R, \theta} \|_{L^2(D_R)} + \| \partial_\theta \partial_z \BP_{R, \theta} \|_{L^2(D_R ) } \leq C \| r \partial_\theta u^r \|_{L^2(D_R)} .
			\end{equation}
				By Poincar\'e inequality, one has
			\be \label{lemma3-3-1}
			\|u^r \|_{L^2(\mathcal{O}_R )} \leq C \|\partial_z u^r \|_{L^2(\mathcal{O}_R )}.
			\ee
			This, together with \eqref{lemma3-5.1} and \eqref{lemma3-5-1}, gives
			\be \label{lemma3-6}
			\| \partial_r \BP_{R, \theta} \|_{L^2(\mathcal{D}_R)} + \| \partial_z \BP_{R, \theta} \|_{L^2(\mathcal{D}_R ) } \leq C \|r u^r \|_{L^2(\mathcal{D}_R)}
			\leq
			C R^{\frac12}\|\nabla \Bu\|_{L^2 (\mathcal{O}_R)}
			\ee
			and
			\be \label{lemma3-7}
			\|\partial_\theta \partial_r \BP_{R, \theta} \|_{L^2(\mathcal{D}_R)} + \| \partial_\theta \partial_z \BP_{R, \theta} \|_{L^2(\mathcal{D}_R ) } \leq C \| r \partial_\theta u^r \|_{L^2(\mathcal{D}_R)}
			\leq
			C R^{\frac32} \|  \nabla \Bu \|_{L^2(\mathcal{O}_R)}.
			\ee

		Furthermore, it follows from Lemma \ref{pressureperiodic-new} and \eqref{lemma3-5} that one has
			\be \label{lemma3-11}
			\ba  \int_{\Omega}  P\Bu \cdot \nabla\varphi_{R} & = - \int_0^1 \int_0^{2\pi} \int_{R-1}^R P \cdot r u^r \, dr d\theta dz \\
			& = - \int_0^1 \int_0^{2\pi} \int_{R-1}^R P \left(  \partial_r \Psi_{R, \theta}^r + \partial_z \Psi_{R, \theta}^z     \right)  \, dr d\theta dz \\
			& = \int_0^1 \int_0^{2\pi} \int_{R-1}^R ( \partial_r P \Psi_{R, \theta}^r + \partial_z P \Psi_{R, \theta}^z ) \, dr d\theta dz .
			\ea
			\ee
			Clearly, the right hand side of \eqref{lemma3-11} can be represented by equations \eqref{proof4-20} and \eqref{proof4-20.5} with
			$(\Psi^r_R, \Psi^z_R)$ replaced by	$(\Psi^r_{R,\theta}, \Psi^z_{R,\theta})$.
			
			{\it Step 2. Saint-Vernant type estimate.} We estimate the first two terms on the right hand
			side of \eqref{estimate1} and the right hand side of \eqref{lemma3-11}. 	Using H\"{o}lder inequality one obtains
			\be \label{lemma3-2} \left|   \int_{\Omega} \nabla \varphi_R \cdot \nabla \Bu \cdot \Bu  \right|
			\leq C \| \nabla \Bu \|_{L^2(\mathcal{O}_R)} \cdot R^{\frac12} \| \Bu \|_{L^\infty(\mathcal{O}_R)}
			\leq C R^{\frac12} \| \nabla \Bu \|_{L^2(\mathcal{O}_R)} .
			\ee
It follows from \eqref{lemma3-3-1} that
				\be \label{lemma3-4}
			\left| \int_{\Omega}   \frac12 |\Bu|^2\Bu \cdot \nabla \varphi_R \right| \leq C \|u^r \|_{L^2(\mathcal{O}_R)} \cdot R^{\frac12} \| \Bu \|_{L^\infty(\mathcal{O}_R)}^2
			\leq C R^{\frac12} \| \nabla \Bu \|_{L^2(\mathcal{O}_R)}.
			\ee

			By virtue of Poincar\'e inequality \eqref{lemma3-3-1} and the estimates  \eqref{lemma3-6}-\eqref{lemma3-7}, one has
			\be \label{lemma3-15}
			\ba
			& \left| \int_0^1 \int_0^{2\pi} \int_{R-1}^R (\partial_r u^r  \partial_r \Psi_{R, \theta}^r + \partial_z u^r \partial_z \Psi_{R, \theta}^r )\, dr d\theta dz \right| \\
			\leq\, & C R^{-\frac12} \|\nabla \Bu\|_{L^2(\mathcal{O}_R )} \cdot R^{\frac12} \|u^r \|_{L^2(\mathcal{O}_R )} \leq CR^{\frac12} \|\nabla \Bu\|_{L^2(\mathcal{O}_R)}
			\ea
			\ee
			and
			\be \label{lemma3-16}
			\ba
			\left| \int_0^1 \int_0^{2\pi} \int_{R-1}^R \frac{1}{r^2} \partial_\theta u^r \partial_\theta \Psi_{R, \theta}^r \, dr d\theta dz \right|
			\leq\, & C  R^{-\frac32} \|\nabla \Bu\|_{L^2(\mathcal{O}_R )} \cdot R^{\frac12} \|\partial_\theta u^r \|_{L^2(\mathcal{O}_R)} \\
			\leq\, & C \|\nabla \Bu \|_{L^2(\mathcal{O}_R )}^2
			\leq C \|\nabla \Bu \|_{L^2(\mathcal{O}_R)},
			\ea
			\ee
			where the last inequality is due to the assumption \eqref{periodic-1}. Furthermore, one has
			\be \label{lemma3-17}
			\ba
			& \left| \int_0^1 \int_0^{2\pi} \int_{R-1}^R \left( \frac1r \partial_r - \frac{1}{r^2} \right) u^r \Psi_{R, \theta}^r \, drd\theta dz \right| \\
			\leq\, & C R^{-1} R^{-\frac12} \|\nabla \Bu \|_{L^2(\mathcal{O}_R )} \cdot R^{\frac12} \| u^r\|_{L^2(\mathcal{O}_R)} \leq C R^{-\frac12} \|\nabla \Bu \|_{L^2(\mathcal{O}_R )}
			\ea
			\ee
			and
			\be \label{lemma3-18}
			\ba
			& \left| \int_0^1 \int_0^{2\pi} \int_{R-1}^R \left[ \left(u^r \partial_r +\frac{u^\theta}{r}\partial_\theta + u^z \partial_z \right) u^r - \frac{(u^\theta)^2 }{r} \right] \Psi_{R, \theta}^r \, dr d\theta dz\right| \\
			\leq\, & C \|\Bu \|_{L^\infty(\mathcal{O}_R )} \left(  R^{-\frac12}\|\nabla \Bu \|_{L^2(\mathcal{O}_R)} +  R^{-\frac32} \|u^\theta\|_{L^2(\mathcal{O}_R )} \right)\cdot R^{\frac12} \|u^r \|_{L^2(\mathcal{O}_R)} \\
			\leq\, & C R^{\frac12} \|\nabla \Bu \|_{L^2(\mathcal{O}_R ) }.
			\ea
			\ee
			Collecting the estimates \eqref{lemma3-15}-\eqref{lemma3-18} one derives
			\be \label{lemma3-19}
			\left|  \int_0^1 \int_0^{2\pi} \int_{R-1}^R \partial_r P \Psi_{R, \theta}^r \, dr d\theta dz \right| \leq C R^{\frac12} \|\nabla \Bu\|_{L^2(\mathcal{O}_R ) }.
			\ee
			Similarly, it holds that
			\be \label{lemma3-20}
			\left|  \int_0^1 \int_0^{2\pi} \int_{R-1}^R \partial_z P \Psi_{R, \theta}^z \, dr d\theta dz \right| \leq C R^{\frac12} \|\nabla \Bu\|_{L^2(\mathcal{O}_R ) }.
			\ee
	Combining \eqref{lemma3-2}--\eqref{lemma3-4} and \eqref{lemma3-19}--\eqref{lemma3-20} one arrives at
			\begin{equation}\label{lem3-4ODE}
				Y(R) \leq CR^{\frac12}[Y'(R)]^{\frac12}
			\end{equation}
			where $Y(R)$ is defined in \eqref{localenergy}.
			The same argument as that for the proof of Theorem \ref{mainresult5} proves that $\nabla \Bu \equiv 0$. Thus $\Bu$ is a constant vector. Since $u^\theta$ is independent of $\theta$, one has $\Bu=(0, 0, c)$ for some constant $c$.
		\end{proof}
		
		Now we are ready for the proof of Theorem \ref{corollary2}.

		\begin{proof}[Proof for Theorem \ref{corollary2}}]
		 Since $\Bu$ is a bounded smooth solution to \eqref{SNS} in $\mathbb{R}^2\times \mathbb{T}$, it follows from
		Lemma \ref{pressureperiodic-new} that the equality \eqref{estimate1} still holds.
		
		We divide the rest of proof into three steps.
		
		{\it Step 1. Proof for Case (a) of Theorem \ref{corollary2}}.
		The proof is almost the same as that for Lemma \ref{periodic-Dirichlet}, except that
		\be \label{lemma3-31}
		\ba
		\left| \int_0^1 \int_0^{2\pi} \int_{R-1}^R \frac{1}{r^2} \partial_\theta u^r \partial_\theta \Psi_{R, \theta}^r \, dr d\theta dz \right|
		\leq\,& C R^{-\frac32} \|\nabla \Bu\|_{L^2(\mathcal{O}_R )} \cdot R^{\frac12} \|\partial_\theta u^r \|_{L^2(\mathcal{O}_R)}\\
		\leq \,& C \|\nabla \Bu \|_{L^2(\mathcal{O}_R )}^2.
		\ea \ee
		The computations in the proof of Lemma \ref{periodic-Dirichlet} imply
		\be \label{lemma3-41}
		{Y}(R) \leq C_1 {Y}^{\prime}(R) + C_2 R^{\frac12} \left[{Y}^{\prime}(R)\right]^{\frac12},
		\ee
		where $Y(R)$ is defined in \eqref{localenergy}.
		Hence one has
		\be \label{lemma3-42}
		\left[{Y}^{\prime}(R)\right]^{\frac12} \geq \frac{-C_2 R^{\frac12} +\sqrt{C_2^2 R + 4C_1 Y(R) } }{2C_1} \geq \frac{Y(R)}{ \sqrt{C_2^2 R + 4C_1 Y(R) }}.
		\ee
		Suppose that $\nabla \Bu$ is not identically equal to zero. For $R$ large enough, $Y(R) > 0$,
		\be \label{lemma3-43}
		\left[ C_2^2 R Y^{-2}(R) + 4C_1 Y^{-1}(R)     \right] Y^{\prime}(R) \geq 1,
		\ee
		Let $M$ be a large number satisfying $M^{-1} C_2^2 \leq \frac14$. According to Lemma \ref{periodic-Dirichlet}, there exists an $R_0 >2$ such that $Y(R_0) \geq M$, otherwise $\nabla \Bu \equiv 0$. For every $R > R_0$, integrating \eqref{lemma3-43} over $[R, 2R]$, one gets
		\be \label{lemma3-51}
		2R \cdot C_2^2 \left[ \frac{1}{Y(R) } - \frac{1}{Y(2R)}    \right] + 4C_1 \ln \frac{Y(2R)}{Y(R)} \geq R.
		\ee
		Since $Y(R) \geq M$, it holds that
		\be \label{lemma3-52}
		\frac{Y(2R)}{Y(R)} \geq \exp \left\{ \frac{R}{8 C_1}   \right\}.
		\ee
		This implies the exponential growth of $\| \nabla \Bu \|_{L^2(\Omega_R)}$ and leads to a contradiction to the uniform boundedness of $\nabla \Bu$, according to Lemma \ref{bounded-gradient}. Hence $\nabla \Bu \equiv 0$ and $\Bu = (0, 0, c)$, since $u^\theta$ is axisymmetric. The proof for Case (a) of  Theorem \ref{corollary2} is completed.

	
{\it Step 2. Proof for Case (b) of Theorem \ref{corollary2}}.	Using the divergence free property of $\Bu$, one has
\be \nonumber
\partial_r \int_0^1\int_0^{2\pi} r u^r \,d\theta dz = - \int_0^1\int_0^{2\pi} \partial_\theta u^\theta +\partial_z (r u^z ) \,d\theta dz = 0, \ \ \ \ \text{for all} \,\, 0\leq r < \infty.
\ee
Since $u^r$ is independent of $\theta$, it holds that
\be \label{lemma3-3_2.1}
\int_0^1 r u^r \, dz = \frac{1}{2\pi}\int_0^1\int_0^{2\pi} r u^r \,d\theta dz=  0 \quad \text{and}\quad  \int_0^1 u^r \, dz =0.
\ee
Hence we have Poincar\'{e} inequality \eqref{lemma3-3-1}. Furthermore, 	by virtue of Lemma \ref{Bogovskii}, there exists a vector valued function $\BP_R(r, z) \in H_0^1(D_R; \mathbb{R}^2)$ satisfying
the equation \eqref{proof-9}
 and the estimate
\eqref{proof-10}. Therefore, one has
	\be \label{lemma3-3.2.2}
	\begin{aligned}
\int_{\Omega} P \Bu \cdot \nabla \varphi_R
= & \int_0^1 \int_{R-1}^R \int_0^{2\pi} P u^r r \, d\theta dr dz
  =  \int_0^1 \int_{R-1}^R\int_0^{2\pi} P \left( \partial_r \Psi_R^r + \partial_z \Psi_R^z  \right) \,d\theta drdz\\
  = & 	 - \int_0^1 \int_{R-1}^R\int_0^{2\pi} (\partial_r P \Psi_R^r + \partial_z P \Psi_R^z ) \, d\theta drdz,
\end{aligned}
\ee
where the right hand side can be represented by \eqref{proof4-20} and \eqref{proof4-20.5}

Now let us start the estimate for the right hand side of \eqref{estimate1}.
 The first two terms on the right hand side of \eqref{estimate1} can be estimated as the  same  as that in \eqref{lemma3-2} and \eqref{lemma3-4}. Note that we do not need to estimate the term appeared on the left hand side of \eqref{lemma3-16} since $\partial_\theta u^r\equiv 0$. All the other terms appeared in  \eqref{proof4-20} and \eqref{proof4-20.5} can be estimated  in the exactly same way as that in Step 2 of the proof for Lemma \ref{periodic-Dirichlet}. Hence we arrive at \eqref{lem3-4ODE} with $Y(R)$ defined in \eqref{localenergy}. As in the proof for Theorem \ref{mainresult5}, we can show that $\nabla \Bu \equiv 0$. Thus $\Bu$ is a constant vector. Since $u^r$ is independent of $\theta$, one has $\Bu=(0, 0, c)$ for some constant $c$. This finishes the proof for Case (b) of Theorem \ref{corollary2}.

			{\it Step 3. Proof for Case (c) of Theorem \ref{corollary2}}. For steady solutions of Navier-Stokes system in $\mathbb{R}^2\times \mathbb{T}$,  following the same proof as that for \eqref{proof4-12} one obtains
		\be \nonumber
		\int_0^1 \int_0^{2\pi}  r u^r \,  d\theta dz = 0,  \,\,\int_0^1 \int_0^{2\pi} u^r \, d\theta dz = 0, \,\,\text{and}\,\,  \int_0^1 \int_0^{2\pi}  \int_{R-1}^R  r u^r \, dr d\theta dz = 0.
		\ee
		By Poincar\'e's inequality, one has
		\be \label{coro4-2-1}
		\|u^r \|_{L^2(\mathcal{O}_R )} \leq C R^{\frac12} \| u^r\|_{L^2(\mathcal{D}_R)} \leq C R^{\frac12} \| (\partial_r, \partial_\theta, \partial_z) u^r \|_{L^2(\mathcal{D}_R)} \leq C R \|\nabla \Bu\|_{L^2(\mathcal{O}_R)}.
		\ee
		And there exists a vector valued function $\BP_R(r, \theta, z)\in H_0^1 (\mathcal{D}_R; \mathbb{R}^3)$ satisfying  \eqref{proof4-13}-\eqref{proof4-15}. Thus it holds that
		\be \label{coro4-3} \ba
		\int_{\Omega} P \Bu \cdot \nabla \varphi_R & = \int_0^1 \int_0^{2\pi} \int_{R-1}^R P u^r r \, dr d\theta dz \\
		& = -   \int_0^1 \int_0^{2\pi} \int_{R-1}^R (\partial_r P \Psi_R^r + \partial_\theta P \Psi_R^\theta +  \partial_z P \Psi_R^z ) \, dr d\theta dz.
		\ea  \ee
	Clearly, we have the same equations \eqref{proof4-20}--\eqref{proof4-20.5} to characterize the terms on the right hand side of \eqref{coro4-3}.
	
	Now we are in position to estimate the first two terms on the right hand side of \eqref{estimate1} and the terms on \eqref{coro4-3} carefully.
		First,  one has
		\be \label{coro4-1}
		\left| \int_{\Omega} (\nabla \varphi_R \cdot \nabla \Bu) \cdot \Bu  \right|
		\leq C \|\nabla \Bu \|_{L^2(\mathcal{O}_R)} \|\Bu\|_{L^2(\mathcal{O}_R ) }
		\leq C  R^{\frac12} \| \nabla \Bu \|_{L^2(\mathcal{O}_R)}
		\ee
		and
		\be \label{coro4-2} \ba   \left| \int_{\Omega}  \frac12 |\Bu|^2\Bu \cdot \nabla \varphi_R  \right|
		& \leq C  \| \Bu \|_{L^\infty (\mathcal{O}_R ) }^2 \| u^r \|_{L^1  (\mathcal{O}_R ) }  \leq C R \|u^r \|_{L^\infty(\mathcal{O}_R )}.
		\ea
		\ee
		According to \eqref{proof4-15}, one has
		\be \label{coro4-21}
		\ba
		& \left| \int_0^1 \int_0^{2\pi} \int_{R-1}^R \left(\partial_r u^r \partial_r \Psi_R^r + \partial_z u^r \partial_z \Psi_R^r +  \frac{1}{r^2} \partial_\theta u^r \partial_\theta \Psi_R^r \right) \, drd\theta dz \right| \\
		\leq\,\,& C R^{-\frac12} \| \nabla \Bu \|_{L^2(\mathcal{O}_R)} \cdot R^{\frac12} \| u^r \|_{L^2(\mathcal{O}_R)} \leq C  R^{\frac12} \|\nabla \Bu \|_{L^2(\mathcal{O}_R)} \|u^r \|_{L^\infty(\mathcal{O}_R ) }.
		\ea
		\ee
		and
		\be \label{coro4-22} \ba
		& \left|  \int_0^1 \int_0^{2\pi} \int_{R-1}^R \left[ \left( \frac{1}{r} \partial_r   - \frac{1}{r^2}       \right)u^r + \frac{2}{r^2}\partial_\theta u^\theta \right]  \Psi_R^r  \, dr d\theta dz      \right| \\
		\leq\,\,& C R^{-\frac32}  \|\nabla \Bu\|_{L^2(\mathcal{O}_R)}  \cdot   R^{\frac12} \|u^r \|_{L^2(\mathcal{O}_R)}  \leq C R^{-\frac12}    \| \nabla \Bu\|_{L^2(\mathcal{O}_R ) } \| u^r \|_{L^\infty(\mathcal{O}_R ) }.
		\ea \ee
		Furthermore, it holds that
		\be \label{coro4-23}
		\ba
		&  \left| \int_0^1 \int_0^{2\pi} \int_{R-1}^R \left(u^r \partial_r + \frac{1}{r} u^\theta \partial_\theta  + u^z \partial_z \right) u^r \Psi_R^r \, dr d\theta  dz \right| \\
		\leq\, & C  \| (u^r, u^\theta, u^z) \|_{L^\infty (\mathcal{D}_R) } \| (\partial_r, r^{-1} \partial_\theta , \partial_z )u^r \|_{L^2(\mathcal{D}_R)}
		\| \Psi_R^r \|_{L^2(\mathcal{D}_R)} \\
		\leq \,& C R^{\frac12} \|\nabla \Bu \|_{L^2(\mathcal{O}_R )} \| u^r \|_{L^\infty(\mathcal{O}_R)}
		\ea
		\ee
		and
		\be \label{coro4-26}  \left| \int_0^1 \int_0^{2\pi} \int_{R-1}^R \frac{(u^\theta)^2}{r}  \Psi_R^r \, dr d\theta  dz \right|  \leq CR^{-1} \|u^\theta \|_{L^\infty (\mathcal{D}_R)}^2 \| \Psi_R^r \|_{L^1 (\mathcal{D}_R )}  \leq C R^{\frac12} \| \nabla \Bu \|_{L^2(\mathcal{O}_R )},
		\ee
		where the last inequality is a consequence of \eqref{coro4-2-1}.		
		Combining the estimates \eqref{coro4-21}-\eqref{coro4-26} one obtains
		\be \label{coro4-27}  \left| \int_0^1 \int_0^{2\pi} \int_{R-1}^R \partial_r P \Psi_R^r \, dr d\theta dz \right|
		\leq C R^{\frac12} \| \nabla \Bu \|_{L^2(\mathcal{O}_R )} .
		\ee

		It follows from \eqref{proof4-15} that one has
		\be \label{coro4-31}
		\ba
		&  \int_0^1 \int_0^{2\pi} \int_{R-1}^R \left[ r(\partial_r u^\theta \partial_r \Psi_R^\theta + \partial_z u^\theta \partial_z \Psi_R^\theta) +  r^{-1} \partial_\theta u^\theta \partial_\theta \Psi_R^\theta \right] \, drd\theta dz \\
		\leq \,& C R \| (\partial_r u^\theta, \partial_z u^\theta, r^{-2} \partial_\theta  u^\theta ) \|_{L^2(\mathcal{D}_R)} \| \tilde{\nabla} \Psi_R^\theta \|_{L^2(\mathcal{D}_R)} \\
		\leq\, & C R^{\frac12} \| \nabla \Bu \|_{L^2(\mathcal{O}_R)}\cdot R^{\frac12} \| u^r \|_{L^2(\mathcal{O}_R) } \\
		\leq\, & C R^{\frac32} \|\nabla \Bu \|_{L^2(\mathcal{O}_R)} \| u^r \|_{L^\infty(\mathcal{O}_R ) }
		\ea
		\ee
		and
		\be \label{coro4-32}
		\ba
		& \left| \int_0^1 \int_0^{2\pi} \int_{R-1}^R \left[ - \frac{1}{r}     u^\theta  + \frac{2}{r} \partial_\theta u^r \right]\Psi_R^\theta  \, dr d\theta dz \right|\\
		\leq\, & C  \left(  R^{-1} \| u^\theta \|_{L^2(\mathcal{D}_R)} + \|r^{-1} \partial_\theta u^r \|_{L^2(\mathcal{D}_R)} \right) \| \Psi_R^\theta \|_{L^2(\mathcal{D}_R)}\\
		\leq\, & C \left( R^{-1 } \|u^\theta \|_{L^\infty (\mathcal{O}_R)} + R^{-\frac12} \| \nabla \Bu \|_{L^2(\mathcal{O}_R ) } \right)  \cdot  R^{\frac12} \| u^r \|_{L^2(\mathcal{O}_R) }\\
		\leq\, & C R^{\frac12} \|\nabla \Bu \|_{L^2(\mathcal{O}_R )}.
		\ea
		\ee
		Furthermore, applying Poincar\'{e} inequality one derives
		\be \label{coro4-33} \ba
		\left| \int_0^1 \int_0^{2\pi} \int_{R-1}^R  u^r u^{\theta} \Psi_R^{\theta}   \, dr d\theta dz \right|
		& \leq C \|u^r \|_{L^\infty(\mathcal{D}_R)} \|u^\theta \|_{L^\infty(\mathcal{D}_R)} \| \Psi_R^\theta \|_{L^1(\mathcal{D}_R)} \\
		& \leq C R^{\frac32} \| \nabla \Bu \|_{L^2(\mathcal{O}_R )} \| u^r \|_{L^\infty(\mathcal{O}_R)}
		\ea  \ee
		and
		\be  \label{coro4-35} \ba
		& \left| \int_0^1 \int_0^{2\pi} \int_{R-1}^R \left(r u^r \partial_r +  u^\theta \partial_\theta  + r u^z \partial_z \right) u^\theta \Psi_R^\theta  \, dr d\theta  dz \right| \\
		\leq\, & C R \| (u^r, u^\theta, u^z) \|_{L^\infty(\mathcal{D}_R)} \| (\partial_r, r^{-1} \partial_\theta, \partial_z )u^\theta \|_{L^2(\mathcal{D}_R)} \| \Psi_R^\theta \|_{L^2(\mathcal{D}_R )} \\
		\leq\, & C R^{\frac32}  \| \nabla \Bu \|_{L^2(\mathcal{O}_R)} \|u^r \|_{L^\infty (\mathcal{O}_R )}.
		\ea
		\ee
		Combining the estimates \eqref{coro4-31}-\eqref{coro4-35} one gets
		\be \label{coro4-38}
		\left| \int_0^1 \int_0^{2\pi} \int_{R-1}^R \partial_\theta P \Psi_R^\theta  \, dr d\theta dz \right|
		\leq C R^{\frac12} \| \nabla \Bu \|_{L^2(\mathcal{O}_R)} (1  + R \|u^r \|_{L^\infty(\mathcal{O}_R )} ) .  \ee
		
		Similarly, it can be proved that
		\be \label{coro4-50}  \left| \int_0^1 \int_0^{2\pi} \int_{R-1}^R \partial_z P \Psi_R^z  \, dr d\theta dz \right|
		\leq CR^{\frac12} \| \nabla \Bu \|_{L^2(\mathcal{O}_R )}.
		\ee
		
		Since $ru^r$ is bounded, one obtains that
		\be \label{coro-51}
		Y(R) \leq C_1 R \|u^r \|_{L^\infty(\mathcal{O}_R)} + C_2 R^{\frac12} \left[Y^{\prime}(R)\right]^{\frac12},
		\ee
		where $Y(R)$ is defined in \eqref{localenergy}.
		Suppose that $\nabla \Bu$ is not identically equal to zero, there exists an $R_0$ large enough, such that $Y(R_0)>0$. Since $ru^r$ converges to zero, there exists some $R_1> R_0$ such that $ Y(R_0) \geq 2C_1 R  \|u^r\|_{L^\infty(\mathcal{O}_R)}$ for every $R \geq R_1$ . Hence it holds that
		\be \nonumber
		Y(R) \leq 2C_2 R^{\frac12} \left[Y^{\prime}(R)\right]^{\frac12}, \ \ \ R\geq R_1,
		\ee
		which leads to contradiction. Hence $\Bu = (0, 0, c)$.

			{\it Step 4. Proof for Case (d) of Theorem \ref{corollary2}}. Assume that $\Bu$ is a bounded, smooth solution to the Navier-Stokes system \eqref{SNS} in $\mathbb{R}^2\times \mathbb{T}$. Taking the $x_3$-derivative of the momentum equation,
\be \label{6-1}
-\Delta \partial_{x_3} \Bu + (\partial_{x_3} \Bu \cdot \nabla)\Bu + (\Bu \cdot \nabla) \partial_{x_3} \Bu + \nabla \partial_{x_3} P = 0.
\ee
Multiplying the equation \eqref{6-1} by $\varphi_R \partial_{x_3} \Bu$ and integrating over $\Omega$, one has
\be \ba
& \int_\Omega |\nabla \partial_{x_3} \Bu|^2 \varphi_R + \int_\Omega \nabla \varphi_R \cdot \nabla \partial_{x_3} \Bu \cdot \partial_{x_3} \Bu + \int_\Omega (\partial_{x_3} \Bu \cdot \nabla) \Bu \cdot \partial_{x_3} \Bu \varphi_R \\
=\,\, &\frac12 \int_{\Omega} (\Bu \cdot \nabla \varphi_R) |\partial_{x_3} \Bu|^2 + \int_{\Omega} \partial_{x_3} P   \partial_{x_3} \Bu  \cdot \nabla  \varphi_R.
\ea \ee
Since $\int_0^1 \partial_{x_3} \Bu \, dx_3 = 0$, by virtue of Poincar\'{e} inequality and Lemma \ref{bounded-gradient}, one has
\be \label{6-5}
\left| \int_\Omega \nabla \varphi_R \cdot \nabla \partial_{x_3} \Bu \cdot \partial_{x_3} \Bu \right| \leq \|\nabla \partial_{x_3} \Bu \|_{L^2(\mathcal{O}_R)} \| \partial_{x_3} \Bu\|_{L^2(\mathcal{O}_R)}\leq C R^{\frac12} \|\nabla \partial_{x_3} \Bu \|_{L^2(\mathcal{O}_R)}
\ee
and
\be \label{6-6}
\int_\Omega (\partial_{x_3} \Bu \cdot \nabla) \Bu \cdot \partial_{x_3} \Bu \varphi_R = - \int_{\Omega} (\partial_{x_3} \Bu \cdot \nabla ) \partial_{x_3} \Bu \cdot \Bu \varphi_R - \int_\Omega (\partial_{x_3} \Bu \cdot \nabla \varphi_R) ( \Bu \cdot \partial_{x_3} \Bu).
\ee
Note that it holds that
\be \label{6-7}
\ba
\left| \int_\Omega (\partial_{x_3} \Bu \cdot \nabla) \partial_{x_3} \Bu \cdot \Bu \varphi_R \right|
& \leq \| \nabla \partial_{x_3} \Bu \sqrt{ \varphi_R} \|_{L^2(\Omega)} \|\partial_{x_3} \Bu \sqrt{\varphi_R } \|_{L^2(\Omega)} \|\Bu \|_{L^\infty(\Omega)} \\
& \leq \|\nabla \partial_{x_3} \Bu \sqrt{ \varphi_R} \|_{L^2(\Omega)}\frac{1}{2\pi} \|\partial_{x_3}^2 \Bu \sqrt{\varphi_R } \|_{L^2(\Omega)} \|\Bu \|_{L^\infty(\Omega)} \\
& \leq \frac{1}{2\pi} \|\Bu \|_{L^\infty(\Omega)}\|\nabla \partial_{x_3} \Bu \sqrt{ \varphi_R} \|_{L^2(\Omega)}^2,
\ea
\ee
where the Wirtinger inequality (cf. \cite[p. 185]{HLP})
\[
\|\partial_{x_3} \Bu \sqrt{\varphi_R } \|_{L^2(\Omega)} \leq \frac{1}{2\pi} \|\partial_{x_3}^2 \Bu \sqrt{\varphi_R } \|_{L^2(\Omega)}
\]
has been used. Furthermore, the straightforward computations yield
and
\be \label{6-8}
 \left| \int_\Omega (\partial_{x_3} \Bu \cdot \nabla \varphi_R) (\Bu \cdot \partial_{x_3} \Bu )\right|
\leq \| \partial_{x_3} \Bu \|_{L^2(\mathcal{O}_R) }^2 \| \Bu \|_{L^\infty(\mathcal{O}_R)} \leq  CR^{\frac12}\|\nabla \partial_{x_3} \Bu\|_{L^2(\mathcal{O}_R)}
\ee
and
\be \label{6-9}
\left| \int_\Omega \Bu \cdot \nabla \varphi_R |\partial_{x_3} \Bu|^2 \right| \leq \|\Bu\|_{L^\infty(\mathcal{O}_R)}\|\partial_{x_3} \Bu\|_{L^2(\mathcal{O_R})}^2
\leq CR^{\frac12} \|\nabla \partial_{x_3} \Bu\|_{L^2(\mathcal{O}_R)}.
\ee
Finally, one has
\be \label{6-10}
\left| \int_{\Omega} \partial_{x_3} P   \partial_{x_3} \Bu  \cdot \nabla  \varphi_R \right| \leq \|\partial_{x_3} P\|_{L^2(\mathcal{O}_R)} \|\partial_{x_3} \Bu\|_{L^2(\mathcal{O}_R)} \leq C R^{\frac12} \|\nabla \partial_{x_3} \Bu\|_{L^2(\mathcal{O}_R)}, 
\ee
where the last inequality is due to the fact that $\partial_{x_3} P$ is uniformly bounded. 

Define
\be \nonumber
Z(R) = \int_{\mathbb{R}^2} \int_0^1 |\nabla \partial_{x_3} \Bu|^2\varphi_R(\sqrt{x_1^2+x_2^2}) dx_3 dx_1dx_2.
\ee
If $\|\Bu\|_{L^\infty(\Omega)} <2\pi $, one has
\be \nonumber
Z(R) \leq C R^{\frac12} Z^{\prime}(R)^{\frac12}.
\ee
The same argument as that for the proof of Theorem 1.3 proves that $\nabla \partial_{x_3} \Bu \equiv 0$, and thus
$\partial_{x_3} \Bu$ is identically equal to a constant vector, which is indeed zero because $\int_0^1 \partial_{x_3} \Bu  dx_3 =0$. 
Hence, $\Bu$ must be independent of $x_3$. Then it is straightforward to see that the pressure $P$ is
also independent of $x_3$, and thus the two-dimensional vector field $(u_1, u_2)$ is a solution to the
stationary Navier-Stokes equations in $R^2$. According to \cite[Theorem 5.1]{KNSS}, the bounded
two-dimensional solution $(u_1, u_2)$  must be a constant vector. Then by elliptic theory, one
can also show that $u_3$, which is bounded, is also a constant.

 The proof of Theorem \ref{corollary2} is completed.
	\end{proof}

	{\bf Acknowledgement.}
The research of Gui is partially supported by NSF grant DMS-1901914. The research of Wang is partially supported by NSFC grants 12171349 and 11671289. The research of  Xie is partially supported by  NSFC grants 11971307 and 11631008, and  Natural Science Foundation of Shanghai 21ZR1433300.
	\medskip

\end{document}